\begin{document}
\setcounter{page}{1}
\setlength{\unitlength}{10mm}
\newcommand{\f}{\frac}
\newtheorem{theorem}{Theorem}[section]
\newtheorem{lemma}[theorem]{Lemma}
\newtheorem{proposition}[theorem]{Proposition}
\newtheorem{corollary}[theorem]{Corollary}
\theoremstyle{definition}
\newtheorem{definition}[theorem]{Definition}
\newtheorem{example}[theorem]{Example}
\newtheorem{solution}[theorem]{Solution}
\newtheorem{xca}[theorem]{Exercise}
\theoremstyle{remark}
\newtheorem{remark}[theorem]{Remark}
\numberwithin{equation}{section}
\newcommand{\cardinal}[1]{\vert #1 \vert}
\newcommand{\Bset}{\left\{ 0 , 1 \right\}}
\newcommand{\srough}{$(U,W,T,S)$}
\newcommand{\srougheq}{$G=(U,W,T,S)$}
\newcommand{\gup}[1]{\overline{G}(#1)}
\newcommand{\gupp}[1]{\overline{G'}(#1)}
\newcommand{\glo}[1]{\underline{G}(#1)}
\newcommand{\eref}[1]{(\ref{#1})}
\newcommand{\sta}{\stackrel}
\title{A combinatorial approach to certain topological spaces based on minimum complement S-approximation spaces}
\author { M. R. Hooshmandasl\footnote{Speaker}, M. Alambardar Meybodi, A. K. Goharshady and A. Shakiba}
\date{}
\maketitle
\begin{picture}(5,2)(3,-9)
 \put(0.5,-1.5){\includegraphics[width=2.5cm]{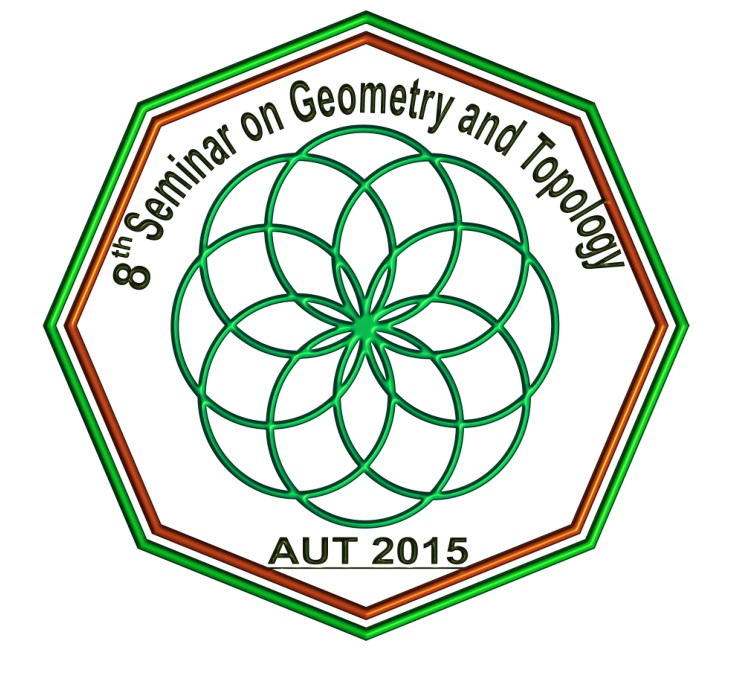}}
  \put(6.3,-0.0){ \textcolor{black}{\textbf{$8^{th}$ Seminar on Geometry and Topology} }}
 \put(5.3,-0.7){\textcolor{blue}{\emph{ Amirkabir University of Technology, December 15-17, 2015}}}
 \put(16.5,-1.5){\includegraphics[width=2.3cm]{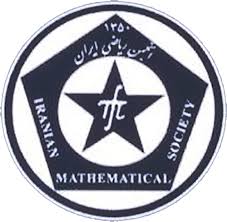}}
\end{picture}
\begin{abstract}
\noindent   An S-approximation space is a novel approach to study systems with uncertainty that are not expressible in terms of inclusion relations. In this work, we further examined these spaces, mostly from a topological point of view by a combinatorial approach. This work also identifies a subclass of these approximation spaces, called $S_\mathcal{MC}$-approximations. Topological properties of this subclass are investigated and finally, the topologies formed by $S_\mathcal{MC}$-approximations are enumerated up to homeomorphism.

\vspace{.5cm}\leftline{{AMS subject Classification 2010: 54F99, 68P01
}}

\vspace{.5cm}\leftline{Keywords: S-approximation space, $S_\mathcal{MC}$-approximation, Rough set, Combinatorial Enumeration.}
\end{abstract}

\section{Introduction}
In almost all real-life applications we should handle uncertainty. In non-crisp sets, uncertainty is characterized by boundary regions, non-empty subsets of the universe where nothing can be said about their element memberships. Approximation is one of the ways to deal with these uncertainties. In classical set theory, a subset $A$ of a universe $U$ induces a partition $\left\{ A, U - A \right\}$ on that universe. This partition might be interpreted as a knowledge about elements of $U$, i.e. elements of $A$ are indiscernible. The same thing holds for elements of $U-A$. This can be generalized to any partition $\mathcal{P}$ of $U$, supposing that elements in the same equivalence class of $\mathcal{P}$ are not distinguishable but those in different classes are. In consequence, for a subset $A$ of $U$, the problem of whether $x$ belongs to $A$ or not, with respect to knowledge $\mathcal{P}$, may become undecidable, i.e. we may have indiscernible elements, with respect to $\mathcal{P}$, which are or are not members of $A$. To cope with such uncertainty, a number of tools are invented such as Dempster-Shafer theory of evidence \cite{Shafer1976}, theory of fuzzy sets \cite{Zadeh1986,Zadeh1984,Zadeh1983,Zadeh1965}, and theory of rough sets \cite{Pawlak1991,Pawlak1984,Pawlak1982}. Rough set theory and Dempster-Shafer theory of evidence are two independent approaches for handling uncertainty, but there is an important resemblance between the two. More formally, lower and upper approximations of rough set theory correspond to the inner and outer reductions from Dempster-Shafer theory\cite{Grzymala1988}.

Since their introduction in 1980s \cite{Pawlak1982}, rough sets have been applied to many different areas, such as discovering data patterns, a core subject of data mining, and dealing with incomplete information systems\cite{Pawlak2002, Pawlak2007}. Studying rough set and its generalizations together with topology has been an interesting research topic, as discussed in \cite{Polkowski2002, L1, L2} and the connection between rough set theory and topology was found early in the framework of topology of partitions. Basic dependence of rough sets on certain topological spaces made this discovery not too unexpected\cite{Mousavi2001, Pei2011}. L. Polkowski implemented topological spaces using rough sets which were based on information systems \cite{Polkowski2002}. A. Skowron in $1988$ and A. Wiweger at the same time, but independently, discussed it on Z. Pawlak's rough sets. The relationship between the modified sets, topological spaces and rough sets based on pre-order was considered by J. Kortelainen in $1994$\cite{Kortelainen1994}. This discussion was continued in \cite{Lin1992, Lin1998}. Analyzing the relation between generalized rough sets and topologies from different viewpoints is another interesting research area.

Rough set theory and its generalizations are all based on the inclusion relation \cite{Pawlak.1988, Pawlak.1994, Pawlak1982, Pawlak1984, Yao1996, Yao2003, Yao1996a, Yao1996b}, which can be considered  as a limitation. In this work, we use a new concept named $S$-approximation set which is proposed in \cite{Hooshmand.Unpub}. This concept is independent from the inclusion relation and contains rough sets and their generalizations as special cases. It is also applied in three-way decision theory in \cite{Shakiba2015}, neighborhood systems in \cite{kais}, fuzzy and intuitionistic fuzzy set theories \cite{jais}.

Then we will study topological spaces built  upon these approximations. Moreover, we will discuss homeomorphisms between such topologies and state the necessary and sufficient condition under which two such topologies are homeomorphic. It is well-known that homeomorphism is an equivalence relation over the class of all topologies. We will count the number of equivalence classes under homeomorphism over these topologies as well.

\section{Preliminaries}	
In this section, we introduce the fundamental concepts of rough sets, generalizations of rough sets and topological spaces.
\subsection{Basic Rough Set and its Generalizations}\label{sec.rough}
Let $U$ be a non-empty finite set and $R \subseteq U \times U$, an equivalence relation on $U$. This relation partitions the set $U$ into equivalence classes like $\left[ x \right]_{R}$ which consists of all $y \in U$ such that $x R y$. Let $X$ be a subset of $U$, then the set $X$ can be approximated by equivalence classes of $R$ by constructing the lower and upper approximations of $X$ with respect to $R$, as is proposed by Z. Pawlak in \cite{Pawlak1982} as follows
$$ \underline{app}_R(X) = \left\{ x \in U \vert \left[ x \right]_{R} \subseteq X \right\} , $$
$$ \overline{app}_R(X) = \left\{ x \in U \vert \left[ x \right]_{R} \cap X \neq \emptyset \right\} . $$

If $\underline{app}_R(X) = \overline{app}_R(X)$, then the set $X$ is called \emph{definable} with respect to $R$, otherwise it is called a \emph{rough set} with respect to $R$. The ordered pair $(\underline{app}_R(X), \overline{app}_R(X))$ is called the \emph{approximation space} of $X$ with respect to $R$.
\begin{proposition}[\cite{Pawlak1982}]
	\label{prop.propertiesOfRoughSets}
	Let $U$ be a non-empty finite set and $R \subseteq U \times U$ denotes an equivalence relation on $U$, then for every $X, Y \subseteq U$ the following properties hold:
	\begin{enumerate}
		\item $\underline{app}_R(X) \subseteq X \subseteq \overline{app}_R(X)$,
		\item $\underline{app}_R(U) = \overline{app}_R(U) = U$ and $\underline{app}_R(\emptyset) = \overline{app}_R(\emptyset) = \emptyset$,
		\item $\overline{app}_R(X \cup Y) = \overline{app}_R(X) \cup \overline{app}_R(Y)$,
		\item $\underline{app}_R(X \cap Y) = \underline{app}_R(X) \cap \underline{app}_R(Y)$,
		\item $X \subseteq Y$ implies that $\underline{app}_R(X) \subseteq \underline{app}_R(Y)$,
		\item $X \subseteq Y$ implies that $\overline{app}_R(X) \subseteq \overline{app}_R(Y)$,
		\item $\underline{app}_R(X) \cup \underline{app}_R(Y) \subseteq \underline{app}_R(X \cup Y)$,
		\item $\overline{app}_R(X \cap Y) \subseteq \overline{app}_R(X) \cap \overline{app}_R(Y)$,
		\item $\underline{app}_{R}(X) = (\overline{app}_R(X^c))^c$ and equivalently $\overline{app}_{R}(X) = (\underline{app}_R(X^c))^c$.
	\end{enumerate}
\end{proposition}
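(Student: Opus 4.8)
The plan is to verify all nine items by direct elementwise reasoning from the two defining set-builder descriptions of $\underline{app}_R$ and $\overline{app}_R$, since each property is a statement about the membership of an arbitrary $x \in U$ and the only structural input required is that $R$ is an equivalence relation. In fact the sole feature of $R$ the argument will use is reflexivity, which guarantees $x \in [x]_R$ for every $x$; I would isolate this observation at the outset because it is precisely what powers items~1 and~2. For item~1, if $x \in \underline{app}_R(X)$ then $[x]_R \subseteq X$ and reflexivity gives $x \in [x]_R \subseteq X$, so $\underline{app}_R(X) \subseteq X$; dually, $x \in X$ forces $x \in [x]_R \cap X$, whence $x \in \overline{app}_R(X)$. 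For item~2, $\underline{app}_R(U) = U$ and $\overline{app}_R(\emptyset) = \emptyset$ are immediate from $[x]_R \subseteq U$ and $[x]_R \cap \emptyset = \emptyset$, while $\overline{app}_R(U) = U$ and $\underline{app}_R(\emptyset) = \emptyset$ again use reflexivity (each class is nonempty, and no nonempty class is contained in $\emptyset$).

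For items~3 through~6 I would simply translate the membership condition through the relevant set identity. Item~3 unpacks as $[x]_R \cap (X \cup Y) \neq \emptyset \iff ([x]_R \cap X) \cup ([x]_R \cap Y) \neq \emptyset \iff [x]_R \cap X \neq \emptyset \text{ or } [x]_R \cap Y \neq \emptyset$, which is exactly the asserted equality; item~4 is the analogous distribution of $\subseteq$ over $\cap$, namely $[x]_R \subseteq X \cap Y \iff [x]_R \subseteq X \text{ and } [x]_R \subseteq Y$. The monotonicity statements 5 and 6 are then one line each: $X \subseteq Y$ propagates through $[x]_R \subseteq X \subseteq Y$ and through $[x]_R \cap X \subseteq [x]_R \cap Y$ respectively. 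Items~7 and~8 require no fresh work, since they follow from monotonicity applied to the inclusions $X, Y \subseteq X \cup Y$ and $X \cap Y \subseteq X, Y$; here, however, I would be careful to state them as inclusions only, because the lower approximation fails to distribute over $\cup$ and the upper over $\cap$, so equality cannot be claimed.

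Finally, item~9 is the duality relation, established by a complementation computation: $x \notin \overline{app}_R(X^c)$ means $[x]_R \cap X^c = \emptyset$, which is equivalent to $[x]_R \subseteq X$, i.e.\ $x \in \underline{app}_R(X)$, giving the first identity; substituting $X^c$ for $X$ and complementing both sides yields the second. I would also note that this duality buys an economy: once 4, 5 and 7 are proved for the lower approximation, the companion upper-approximation facts 3, 6 and 8 can be recovered by dualizing through item~9 and De Morgan, so only half the list needs an independent argument. None of the steps presents a genuine obstacle; the only thing demanding attention is bookkeeping, namely invoking reflexivity exactly where it is needed (items 1--2) and resisting the temptation to strengthen the inclusions in 7--8 into equalities.
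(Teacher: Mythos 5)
The paper never proves this proposition: it is imported verbatim from Pawlak's original work \cite{Pawlak1982} as a known background fact, so there is no internal proof to compare yours against. Judged on its own, your argument is correct and complete. The elementwise verification goes through exactly as you describe; your isolation of reflexivity as the only property of the equivalence relation actually used is accurate (symmetry and transitivity play no role in items 1--9, which is precisely why Yao's generalization to arbitrary relations, cited next in the paper, can preserve most of these properties); your insistence that items 7 and 8 are inclusions only is the right caution, since $\underline{app}_R$ does not distribute over $\cup$ nor $\overline{app}_R$ over $\cap$; and the duality economy you point out is legitimate and non-circular, because item 9 is established by a direct complementation computation, independently of items 3--8, after which $\overline{app}_R(X \cup Y) = \left(\underline{app}_R(X^c \cap Y^c)\right)^c$ together with De Morgan and item 4 recovers item 3, and similarly for 6 and 8. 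Nothing is missing.
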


Y. Yao's extension of Z. Pawlak's rough set is obtained by using an arbitrary relation, possibly not equivalence \cite{Yao1996}. Let $R$ be a binary relation on $U$. Then the ordered pair $(U,R)$ is called a \emph{generalized approximation space} based on the relation $R$. For $X \subseteq U$, the lower and upper approximations of set $X$ are generalized as
$$  \underline{app}_R(X) = \left\{ x \in U \vert R(X) \subseteq X \right\}, $$
and
$$  \overline{app}_R(X) = \left\{ x \in U \vert R(X) \cap X \neq \emptyset \right\}, $$
where $R(x) = \left\{ y \in U \vert (x,y) \in R \right\}$.

\begin{proposition} \cite{Yao1996} Let $U$ be a non-empty finite set and $R \subseteq U \times U$ an arbitrary  relation on $U$, then for every $X, Y \subseteq U$, properties of proposition \eref{prop.propertiesOfRoughSets} are satisfied.
\end{proposition}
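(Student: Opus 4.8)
The plan is to verify each property directly from the set-builder definitions, rewriting every condition in terms of the neighbourhood $R(x) = \{ y \in U \mid (x,y) \in R \}$ of a point $x$, so that the defining conditions read $R(x) \subseteq X$ for the lower and $R(x) \cap X \neq \emptyset$ for the upper approximation. The guiding observation is that most of these statements are purely Boolean facts about the family $\{ R(x) \}_{x \in U}$ of subsets of $U$ and hence never invoke any structural feature of $R$; this is exactly why they survive the passage from an equivalence relation to an arbitrary one.

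First I would prove the two distributivity identities. For property~3, membership $x \in \overline{app}_R(X \cup Y)$ says $R(x) \cap (X \cup Y) \neq \emptyset$, which by distributing the intersection over the union is equivalent to $R(x) \cap X \neq \emptyset$ or $R(x) \cap Y \neq \emptyset$, i.e.\ to $x \in \overline{app}_R(X) \cup \overline{app}_R(Y)$. Property~4 is the dual computation from $R(x) \subseteq X \cap Y \iff R(x) \subseteq X \text{ and } R(x) \subseteq Y$. The monotonicity properties~5 and~6 are then immediate: if $X \subseteq Y$ then $R(x) \subseteq X$ forces $R(x) \subseteq Y$, and $R(x) \cap X \neq \emptyset$ forces $R(x) \cap Y \neq \emptyset$. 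From monotonicity the semidistributive inclusions~7 and~8 follow with no extra work, applying property~5 to $X, Y \subseteq X \cup Y$ and property~6 to $X \cap Y \subseteq X, Y$. For the duality~9 I would compute $(\overline{app}_R(X^c))^c = \{ x \mid R(x) \cap X^c = \emptyset \} = \{ x \mid R(x) \subseteq X \} = \underline{app}_R(X)$, the second identity following by substituting $X^c$ for $X$.

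The subtle point, and the real obstacle, concerns property~1 together with the full form of property~2. A direct check shows that $\underline{app}_R(U) = U$ and $\overline{app}_R(\emptyset) = \emptyset$ hold unconditionally, but the remaining statements are not relation-independent: the inclusions $\underline{app}_R(X) \subseteq X \subseteq \overline{app}_R(X)$ require $x \in R(x)$ for every $x$, that is, \emph{reflexivity} of $R$, while $\overline{app}_R(U) = U$ and $\underline{app}_R(\emptyset) = \emptyset$ require every $R(x)$ to be non-empty, that is, \emph{seriality} of $R$. For the empty relation on a one-point universe, for instance, $\underline{app}_R(\emptyset) = U \neq \emptyset$, so the verbatim claim fails without hypotheses. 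I would therefore state at the outset that the inherited properties are to be read under the standing assumption that $R$ is reflexive and serial, conditions that an equivalence relation satisfies automatically; under that assumption the short verifications above, supplemented by the reflexivity argument for property~1 and the seriality argument for property~2, complete the proof.
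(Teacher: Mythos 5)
Your direct verification is correct, and since the paper states this proposition purely as a citation to Yao's 1996 article and supplies no proof of its own, there is nothing to compare against step by step; the point-by-point check from the set-builder definitions (after silently correcting the paper's typo $R(X)$ to $R(x)$, which you did) is the only reasonable route, and your arguments for properties 3 through 9 --- distributivity for 3 and 4, monotonicity for 5 and 6, the inclusions 7 and 8 as corollaries, complement duality for 9 --- are all sound. More importantly, your ``subtle point'' is not a weakness of your write-up but a genuine defect of the statement as printed: for an arbitrary relation the proposition is literally false, and your counterexample is decisive, since for the empty relation on $U=\{x\}$ one gets $\underline{app}_R(\emptyset)=U\neq\emptyset$ and $\overline{app}_R(U)=\emptyset\neq U$, so property 1 and half of property 2 fail simultaneously. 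This matches the finer-grained treatment in the cited source: properties 3--9, together with $\underline{app}_R(U)=U$ and $\overline{app}_R(\emptyset)=\emptyset$, hold for every binary relation, while $\underline{app}_R(X)\subseteq X\subseteq\overline{app}_R(X)$ for all $X$ is equivalent to reflexivity of $R$, and the remaining identities $\overline{app}_R(U)=U$ and $\underline{app}_R(\emptyset)=\emptyset$ are equivalent to seriality (every $R(x)$ non-empty) --- the rough-set counterparts of the modal axioms T and D. Your repaired statement, with reflexivity and seriality as standing hypotheses (both automatic for equivalence relations, hence consistent with proposition \eref{prop.propertiesOfRoughSets}), together with your verifications, constitutes a complete and correct proof of everything that is actually provable here; the discrepancy you uncovered should be charged to the paper's over-broad transcription of Yao's result, not to your argument.
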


There are also many other generalizations of rough set theory such as rough set models for incomplete information systems \cite{Skowron1996, Slowinski1996, Krys1998, Liang2002}, rough set models based on coverings \cite{Zakowski1983, Boni1998, Zhu2003} and rough fuzzy sets or fuzzy rough sets \cite{Dub1990}. Moreover, these models can be generalized to the case of two universes \cite{Wong1995,Wong1992} based on the Shafer's compatibility view \cite{Shafer1987, Pei2004}, generalized rough fuzzy sets \cite{Wu2003}, and arbitrary relations \cite{Davvaz.2008}.

\subsection{Topological Spaces}
In this section, we would briefly review basic concepts of topologies. A pair $(X, \tau)$ where $X$ is a non-empty set and $\tau$ is a family of subsets of $X$ containing $\emptyset$ and $X$ is called a \emph{topology} if $\tau$ is closed under arbitrary union and finite intersection. Members of $\tau$ are called open sets and their complements are called closed sets. Let $(X, \tau_X)$ and $(Y, \tau_Y)$ be two topologies. A function $f: X \rightarrow Y$ is said to be \emph{continuous} if for any open subset $A$ in $Y$, $f^{-1}(A)$ is also open in $X$. Moreover, a bijective continuous map $\Phi : X \rightarrow Y$ is called a \emph{homeomorphism} if $\Phi^{-1}$ is also continuous. If such a $\Phi$ exists, then $(X, \tau_X)$ and $(Y, \tau_Y)$ are called \emph{homeomorphic} topologies. Homeomorphic topologies form equivalence classes over any set of topologies.

For every binary relation $R$ over $U$, we can examine a topology generated by $R$. The \emph{right neighborhood} is defined as $xR = \left\{ y \in U \vert (x,y) \in R \right\}$, and the topology over $U$ is denoted by $(U, \tau_R)$, where $\tau_R = \left\{ xR \vert x \in U \right\}$. For more details, refer to \cite{Li2012, Lashin2005}.

\subsection{$S$-approximation}\label{sec.signedRoughSets}
$S$-approximation is a new mathematical approach to study approximation spaces \cite{Hooshmand.Unpub}. This approach is proposed on the basis of the ideas of Dempster's multi-valued mappings \cite{Dempster1967}, and has the Pawlak's rough set and its generalizations as special cases. These spaces are first proposed in \cite{Hooshmand.Unpub} and are reviewed in this section.
\begin{definition}[\cite{Hooshmand.Unpub}]
	\label{grs.def.grs}
	An $S$-approximation is the quadruple $G = (U, W, T, S)$ where $U$ and $W$ are finite non-empty sets, $T$ is a mapping of the form $T: U \rightarrow P^{\star}(W)$ and $S$ is a mapping of the form $S : P^{\star}(W) \times P^{\star}(W) \rightarrow \left\{ 0 , 1 \right\}$.
	
	For a non-empty subset $X$ of $W$, the upper and lower approximations of $X$ are defined as follows:
	$$ \overline{G}(X) = \left\{ x \in U \vert S(T(x), X^c) = 0 \right\} , $$
	and
	$$ \underline{G}(X) = \left\{ x \in U \vert S(T(x), X) = 1 \right\} , $$
	where $X^c$ is the complement of $X$ with respect to $W$.
\end{definition}

\subsubsection{$S_{\mathcal{M}}$-approximations}\label{sec.minConditionedSrough}

There exists a know sub-class of $S$-approximation spaces which satisfy properties $(3)$ to $(10)$ of proposition \eref{prop.propertiesOfRoughSets}, but are not inclusion-based. These properties are satisfied because their $S$ relation satisfies the $S$-min condition, introduced in \cite{Hooshmand.Unpub}.

\begin{definition}[$S$-min Condition \cite{Hooshmand.Unpub}]
	Let $G = (U,W,T,S)$ be an $S$-approximation. We say that the relation $S : P^{\star}(W) \times P^{\star}(W) \rightarrow \left\{ 0 , 1 \right\}$ is a relation in $S_{\mathcal{M}}$ class if it satisfies
	$$ S(A, B \cap C) = \min \left\{ S(A, B) , S(A, C) \right\} , $$
	for arbitrary non-empty subsets $A$, $B$, and $C$ of $W$. We also say an $S$-approximation $G=(U, W, T, S') $ is an $S_{\mathcal{M}}$-approximation  if $S'$ belongs to the $S_{\mathcal{M}}$ class.
\end{definition}
\begin{remark}
	The inclusion relation does indeed belong to the $S_{\mathcal{M}}$ class but there are other non-inclusion relations in this class as well, cf. \cite{Hooshmand.Unpub}.
\end{remark}

The following proposition is the counterpart of proposition \eref{prop.propertiesOfRoughSets} in $S_{\mathcal{M}}$-approximation spaces.
\begin{proposition}{\cite{Hooshmand.Unpub}}
	\label{s.app.prop}
	Let $G = (U,W,T,S)$ be an $S_{\mathcal{M}}$-approximation. For all $A, B \subseteq W$ and $x \in U$, the following hold:
	\begin{enumerate}
		\item $ A \subseteq B $ implies that for all $X \subseteq W$,  $S(X, B^c) \leq S(X, A^c) ,$
		\item $ \max\{  S(T(x), A),S(T(x), B) \}\leq S(T(x), A \cup B) ,$
		\item $\overline{G}(A \cup B) =  \overline{G}(A) \cup \overline{G}(B) $,
		\item $\underline{G}(A \cap B) = \underline{G}(A) \cap \underline{G}(B) $,
		\item $ A \subseteq B$ implies $\underline{G}(A) \subseteq \underline{G}(B) $,
		\item $ A \subseteq B$ implies $\overline{G}(A) \subseteq \overline{G}(B) $,
		\item $\underline{G}(A) \cup \underline{G}(B) \subseteq \underline{G}(A \cup B) $,
		\item $\overline{G}(A \cap B) \subseteq \overline{G}(A) \cap \overline{G}(B) $,
		\item $\underline{G}(A) = (\overline{G}(A^c))^c$ and equivalently $\overline{G}(A) = (\underline{G}(A^c))^c$.
	\end{enumerate}
\end{proposition}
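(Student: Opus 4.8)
The plan is to show that every one of the nine items is a direct consequence of the defining $S$-min condition $S(A,B\cap C)=\min\{S(A,B),S(A,C)\}$ together with the fact that $S$ takes values only in $\Bset$. Before handling the items one by one, I would isolate two elementary observations that carry the bulk of the work. The first is \emph{monotonicity of $S$ in its second argument}: if $P\subseteq Q$ (both non-empty) then $P=P\cap Q$, so the min condition gives $S(X,P)=S(X,P\cap Q)=\min\{S(X,P),S(X,Q)\}$, whence $S(X,P)\le S(X,Q)$. The second is the \emph{complement duality} at the level of the approximations, which is exactly item (9): from the definitions $\gup{A^c}=\{x\in U\mid S(T(x),A)=0\}$, and since $S(T(x),A)\in\Bset$, its complement in $U$ is precisely $\{x\mid S(T(x),A)=1\}=\glo{A}$. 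I would prove (9) first, because it lets me transfer every statement about $\gup{\,\cdot\,}$ to $\glo{\,\cdot\,}$ and conversely, so that the list splits into the dual pairs (3)$\leftrightarrow$(4), (5)$\leftrightarrow$(6), (7)$\leftrightarrow$(8).

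With monotonicity in hand, item (1) is immediate: $A\subseteq B$ forces $B^c\subseteq A^c$, so $S(X,B^c)\le S(X,A^c)$. Item (2) follows from the inclusions $A\subseteq A\cup B\supseteq B$ by applying monotonicity twice and taking the maximum. The two ``core'' identities (3) and (4) come straight from the min condition read through $\Bset$-valuedness. For (4), $x\in\glo{A\cap B}$ means $S(T(x),A\cap B)=1$, i.e. $\min\{S(T(x),A),S(T(x),B)\}=1$, which for a $\Bset$-valued $S$ holds exactly when both values equal $1$, i.e. $x\in\glo{A}\cap\glo{B}$. For (3), I would apply De Morgan, $(A\cup B)^c=A^c\cap B^c$, so that $x\in\gup{A\cup B}$ means $\min\{S(T(x),A^c),S(T(x),B^c)\}=0$, which holds exactly when one of the two is $0$, i.e. $x\in\gup{A}\cup\gup{B}$; alternatively (3) is just the complement-dual of (4) obtained through (9).

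The remaining items are monotonicity corollaries. Item (5) is monotonicity applied to the defining condition of $\glo{\,\cdot\,}$: if $A\subseteq B$ and $S(T(x),A)=1$, then $1\le S(T(x),B)$ forces $S(T(x),B)=1$. Item (6) is the same statement for $\gup{\,\cdot\,}$, read off from (1) (or as the dual of (5) via (9)): $S(T(x),A^c)=0$ and $S(T(x),B^c)\le S(T(x),A^c)$ give $S(T(x),B^c)=0$. Then (7) follows by applying (5) to $A,B\subseteq A\cup B$ and taking unions, and (8) follows symmetrically by applying (6) to $A\cap B\subseteq A,B$ and taking intersections.

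The one genuine point of care — and the only place where the purely algebraic argument can stumble — is the empty set. Since $S$ is declared only on $P^\star(W)\times P^\star(W)$, the expressions $S(X,A\cap B)$, $S(X,A^c)$, and so on are literally undefined whenever the relevant set is empty, so the clean manipulations above apply verbatim only when $A$, $B$, $A\cap B$, $A^c$, and $B^c$ are all non-empty. I would therefore either restrict the statement to that regime or fix the standard boundary conventions (for instance $\glo{\emptyset}=\gup{\emptyset}=\emptyset$ and the dual values on $W$) and then dispatch the degenerate cases $A\cap B=\emptyset$ and $A\cup B=W$ by hand. These reduce to one-line checks once the convention is pinned down, but they are exactly the cases the min-condition algebra does not reach, so they are the part of the proof I expect to require explicit attention rather than formula-pushing.
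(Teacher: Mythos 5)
The paper never proves this proposition: it is stated with the citation \cite{Hooshmand.Unpub} and used as imported background, so there is no internal proof to compare yours against --- your write-up supplies an argument the paper omits. On its own terms, your argument is correct on the domain where its expressions are defined, and it is the natural one: your monotonicity observation is exactly the paper's lemma \eref{lem.subset.atom} (stated there for minimizing functions, i.e.\ for the sections $B \mapsto S(A,B)$), and items (1)--(8) do follow from it together with $\Bset$-valuedness, De Morgan, and the duality (9), in just the way you describe.

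The one substantive correction concerns your closing claim that, once a convention such as $S(A,\emptyset)=0$ is fixed, the degenerate cases ``reduce to one-line checks.'' They do not: item (3) with $A \cup B = W$ then becomes an actual assertion, and it can be \emph{false} for an $S_{\mathcal{M}}$-relation. Take $U=\left\{ a \right\}$, $W=\left\{ 1,2 \right\}$, $T(a)=W$, and $S(X,Y)=1$ for all non-empty $X,Y$; the $S$-min condition holds (every defined instance reads $1=\min\left\{ 1,1 \right\}$), yet under the convention $\gup{\left\{ 1 \right\}}=\gup{\left\{ 2 \right\}}=\emptyset$ while $\gup{\left\{ 1,2 \right\}}=\gup{W}=U$, so $\gup{A \cup B} \neq \gup{A} \cup \gup{B}$. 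Equality in such boundary cases is simply not a consequence of the $S$-min condition; it requires each $S(T(x),\cdot)$ to have at most one atom, which is the content of theorem \eref{the.single.atom} and is precisely the extra strength the paper later buys by passing to the $S_{\mathcal{MC}}$ class in section \ref{sec.topol}. So of the two resolutions you offer, only the first is sound: read the proposition on instances where every set handed to $S$ is non-empty (as the definitions of $\gup{\cdot}$, $\glo{\cdot}$, and of the $S_{\mathcal{M}}$ class already force), and under that reading your proof is complete.
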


Interestingly, it is not always the case that $\underline{G}(A) \subseteq \overline{G}(A)$ in $S_{\mathcal{M}}$-approximation spaces, although this property always holds in Pawlak's rough sets \cite{Hooshmand.Unpub}.
\begin{example}{\cite{Hooshmand.Unpub}}
	Suppose $G=(U,W,T,S)$ is an $S_{\mathcal{M}}$-approximation where
	$$ S(A,B) =  \left\{
	\begin{matrix}
	1 & \ & A \cup B = W \\
	0 & & \text{otherwise}
	\end{matrix}
	\right. ,
	$$
	$U = \left\{ a\right\}$, and $T(a) = W = \left\{ 1, 2 \right\}$.
	
	In this case $\overline{G}(\left\{ 1 \right\}) = \left\{ x \in U \vert T(x) \cup \left\{ 2 \right\} \neq  W \right\} = \emptyset$, while $$\underline{G}(\left\{ 1 \right\}) = \left\{ x \in U \vert T(x) \cup \left\{ 1 \right\} = W \right\} = \left\{a\right\},$$ so $\underline{G}(\left\{ 1 \right\}) \not\subseteq \overline{G}(\left\{ 1 \right\})$.
\end{example}
The structure of $S_{\mathcal{M}}$-approximations plays an important role in understanding the topological structures which will be introduced in later sections, so we remind some results from \cite{Hooshmand.Unpub}.
\begin{definition}[\cite{Hooshmand.Unpub}]
	\label{def.f.min}
	Let $W$ be a non-empty finite set. A function $f : P^{\star}(W) \rightarrow \left\{ 0 , 1 \right\}$ is said to be minimizing if for each $A, B \subseteq W$,
	$$ f(A \cap B) = \min \left\{ f(A), f(B) \right\} . $$
\end{definition}
\begin{lemma}{\cite{Hooshmand.Unpub}}
	\label{lem.subset.atom}
	Let $f : P^{\star}(W) \rightarrow \left\{ 0 , 1 \right\}$ be a minimizing function. For each $A,B \subseteq W$, if $A \subseteq B$, then $f(A) \leq f(B)$.
\end{lemma}
\begin{lemma}{\cite{Hooshmand.Unpub}}
	\label{lem.minPiecewise}
	Let $G=(U, W, T, S)$ be an $S_{\mathcal{M}}$-approximation and $\vert W \vert = n$. We label the non-empty subsets of $W$ as $ \left\{A_1 , \ldots , A_{2^n - 1}\right\}$. Then there exist minimizing functions $\left\{ f_1, \ldots, f_{2^n - 1} \right\}$ of the form $f_i : P^{\star}(W) \rightarrow \left\{ 0 , 1 \right\}$ such that for every $B \subseteq W$, we have $S(A_i,B) = f_i(B)$ for $1 \leq i \leq n$.
\end{lemma}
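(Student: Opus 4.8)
The plan is to construct each $f_i$ by simply fixing the first argument of $S$. For each of the $2^n - 1$ labeled non-empty subsets $A_i$, I would define $f_i : P^{\star}(W) \to \Bset$ by $f_i(B) = S(A_i, B)$ for every non-empty $B \subseteq W$. With this definition the required identity $S(A_i, B) = f_i(B)$ holds for all $B$ by construction, so the entire content of the lemma reduces to checking that each $f_i$ is minimizing in the sense of Definition \ref{def.f.min}.

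To verify the minimizing property, I would take arbitrary non-empty $B, C \subseteq W$ whose intersection $B \cap C$ is also non-empty (so that $f_i(B \cap C)$ is defined), and compute
$$ f_i(B \cap C) = S(A_i, B \cap C) = \min\{ S(A_i, B), S(A_i, C) \} = \min\{ f_i(B), f_i(C) \}, $$
where the middle equality is precisely the $S$-min condition applied with the fixed first argument $A = A_i$ and with second and third arguments $B$ and $C$. This is exactly the defining equation of a minimizing function, so each $f_i$ lies in the required class, and the piecewise representation of $S$ follows.

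Since the argument is a direct unpacking of the $S$-min condition, there is no substantive obstacle. The only point requiring a small amount of care is the domain bookkeeping: both the $S$-min condition and the minimizing-function condition are meaningful only when the relevant intersection is non-empty, because $S$ and each $f_i$ take arguments in $P^{\star}(W)$ rather than in the full power set. I would therefore record at the outset that the equality is asserted (and used) only for those $B, C$ with $B \cap C \neq \emptyset$, in keeping with the convention already implicit in Definitions \ref{grs.def.grs} and \ref{def.f.min}, after which the verification above applies verbatim.
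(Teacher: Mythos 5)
Your proposal is correct: defining $f_i$ as the partial application $f_i(B) = S(A_i,B)$ and invoking the $S$-min condition with fixed first argument is exactly the content of this lemma, and your attention to the non-emptiness of $B \cap C$ (forced by the domain $P^{\star}(W)$) is the right bookkeeping. Note that the paper itself offers no proof to compare against --- the lemma is quoted from \cite{Hooshmand.Unpub} --- but your argument is the direct, canonical one, and the only caveat worth flagging is a typo in the statement itself: the conclusion should read $1 \leq i \leq 2^n - 1$ rather than $1 \leq i \leq n$, which your construction in fact delivers.
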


Lemma \eref{lem.minPiecewise} leads us towards counting the number and finding the structure of minimizing $f$s.

\begin{definition}[\cite{Hooshmand.Unpub}]
	\label{def.atom}
	Let $f : P^{\star}(W) \rightarrow \left\{ 0 , 1 \right\}$ be a minimizing function. A non-empty subset $\omega$ of the set $W$ is called an atom of $f$ if and only if $f(\omega) = 1$ and for each proper non-empty subset of $\omega$ such as $\eta$, $f(\eta)=0$.
\end{definition}
\begin{proposition}{\cite{Hooshmand.Unpub}}
	\label{prop.disjointAtoms}
	Let $f : P^{\star}(W) \rightarrow \left\{ 0 , 1 \right\}$ be a minimizing function and $\omega_1$ and $\omega_2$ two non-identical atoms of $f$. Then $\omega_1 \cap \omega_2 = \emptyset$.
\end{proposition}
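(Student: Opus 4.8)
The plan is to argue by contradiction: assume $\omega_1 \cap \omega_2 \neq \emptyset$ and show this forces one atom to contain a proper non-empty subset on which $f$ takes the value $1$, contradicting the minimality in Definition \eref{def.atom}.

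First I would set $\eta = \omega_1 \cap \omega_2$. Under the contradiction hypothesis $\eta$ is non-empty, so it is a legitimate argument in $P^{\star}(W)$ and the minimizing identity applies to the pair $\omega_1, \omega_2$. Since both are atoms we have $f(\omega_1) = f(\omega_2) = 1$, hence
$$ f(\eta) = f(\omega_1 \cap \omega_2) = \min \left\{ f(\omega_1) , f(\omega_2) \right\} = 1 . $$

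Next I would split into cases according to how $\eta$ sits inside $\omega_1$. Because $\eta \subseteq \omega_1$, either $\eta$ is a proper non-empty subset of $\omega_1$, or $\eta = \omega_1$. In the first case the atom property of $\omega_1$ gives $f(\eta) = 0$, directly contradicting the displayed equality. In the second case $\omega_1 = \eta \subseteq \omega_2$, and since $\omega_1$ and $\omega_2$ are non-identical, $\omega_1$ is a proper non-empty subset of $\omega_2$; the atom property of $\omega_2$ then forces $f(\omega_1) = 0$, contradicting $f(\omega_1) = 1$. Either way we reach a contradiction, so $\omega_1 \cap \omega_2 = \emptyset$.

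This is essentially a one-line application of the minimizing identity combined with the defining minimality of atoms, so I do not expect a genuine obstacle. The only point requiring care is to verify that $\eta$ is non-empty before invoking the minimizing property of $f$ (whose domain is $P^{\star}(W)$), which is precisely what the contradiction hypothesis supplies; the subsequent case split on whether $\eta$ coincides with $\omega_1$ is what lets me deploy minimality against one atom or the other.
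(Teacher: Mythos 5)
Your proof is correct and complete. Note that the paper itself states this proposition without proof, citing \cite{Hooshmand.Unpub}, so there is no in-paper argument to compare against; your route --- applying the minimizing identity to $\eta = \omega_1 \cap \omega_2$ (which gives $f(\eta)=1$) and then splitting on whether $\eta$ is a proper non-empty subset of $\omega_1$ or equals $\omega_1$, in which case $\omega_1$ is a proper non-empty subset of $\omega_2$ --- is the natural argument, and your explicit check that $\eta$ is non-empty before invoking the minimizing property (whose domain is $P^{\star}(W)$) is precisely the point that requires care.
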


\begin{proposition}{\cite{Hooshmand.Unpub}}
	\label{prop.structure}
	Let $f : P^{\star}(W) \rightarrow \left\{ 0 , 1 \right\}$ be a minimizing function and $\Upsilon$ the set of all atoms of $f$. Then for a subset $X$ of $W$, $f(X) = 1$ if and only if there exists $\omega \in \Upsilon$ such that $\omega \subseteq X$.
\end{proposition}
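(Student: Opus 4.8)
The plan is to establish the two implications separately. The forward implication will rest entirely on the monotonicity recorded in Lemma~\eref{lem.subset.atom}, while the converse will follow from a minimality argument that exploits the finiteness of $W$.

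For the ``if'' direction, I would suppose there is an atom $\omega \in \Upsilon$ with $\omega \subseteq X$. By the definition of an atom, $f(\omega) = 1$. Since $\omega \subseteq X$, Lemma~\eref{lem.subset.atom} yields $f(\omega) \leq f(X)$, and therefore $f(X) = 1$. This direction needs nothing beyond the fact that a minimizing function is monotone with respect to inclusion.

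For the ``only if'' direction, I would assume $f(X) = 1$ and consider the family $\mathcal{F} = \left\{ Y \mid \emptyset \neq Y \subseteq X,\ f(Y) = 1 \right\}$. It is non-empty because $X \in \mathcal{F}$, and since $W$ is finite, $\mathcal{F}$ contains an element $\omega$ that is minimal with respect to inclusion. I claim $\omega$ is an atom: it is non-empty with $f(\omega) = 1$ by construction, and if some proper non-empty subset $\eta \subsetneq \omega$ had $f(\eta) = 1$, then $\eta \subseteq X$ would place $\eta$ in $\mathcal{F}$ and contradict the minimality of $\omega$. Hence $f(\eta) = 0$ for every proper non-empty $\eta \subset \omega$, so $\omega \in \Upsilon$ and $\omega \subseteq X$, as required.

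The proof is short, and the only step deserving care is the converse: the existence of an inclusion-minimal member of $\mathcal{F}$ is exactly where the finiteness of $W$ enters, and it is the sole real obstacle to the argument. Notably, the defining identity $f(A \cap B) = \min\left\{ f(A), f(B) \right\}$ is used only indirectly, through Lemma~\eref{lem.subset.atom}; once monotonicity is in hand, the structural characterization is essentially a consequence of finiteness together with the definition of an atom.
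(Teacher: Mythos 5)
Your proof is correct and complete. Note that the paper itself gives no proof of this proposition --- it is quoted verbatim from \cite{Hooshmand.Unpub} as background --- so there is no in-document argument to compare against; your two-part argument (monotonicity via Lemma \eref{lem.subset.atom} for the ``if'' direction, and extraction of an inclusion-minimal member of $\mathcal{F} = \{ Y \mid \emptyset \neq Y \subseteq X,\ f(Y)=1 \}$, guaranteed by finiteness of $W$, for the ``only if'' direction) is the standard route to this structural fact and every step checks out. One small point worth making explicit: $f(X)=1$ already forces $X \neq \emptyset$, since the domain of $f$ is $P^{\star}(W)$, which is what makes your family $\mathcal{F}$ well-defined.
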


\begin{proposition}{\cite{Hooshmand.Unpub}}
	Let $f : P^{\star}(W) \rightarrow \left\{ 0 , 1 \right\}$ be a minimizing function, $\Upsilon$ the set of all atoms of $f$ and $\vert \Upsilon \vert \geq 2$. Then for each $x \in W$, $\left\{ x \right\}$ is an atom of $f$.
\end{proposition}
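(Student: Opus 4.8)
The plan is to argue by contradiction, exploiting the fact (Proposition \eref{prop.disjointAtoms}) that two distinct atoms are forced to be disjoint. First I would observe that a singleton $\left\{ x \right\}$ has no proper non-empty subset, so the second clause in the definition of an atom (Definition \eref{def.atom}) is vacuously satisfied; hence $\left\{ x \right\}$ is an atom if and only if $f(\left\{ x \right\}) = 1$. The proposition therefore reduces to showing that $f(\left\{ x \right\}) = 1$ for every $x \in W$ whenever $f$ has at least two atoms.

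Suppose, toward a contradiction, that $f(\left\{ x \right\}) = 0$ for some $x \in W$. Since $\vert \Upsilon \vert \geq 2$, I can choose two distinct atoms $\omega_1, \omega_2 \in \Upsilon$, which by Proposition \eref{prop.disjointAtoms} satisfy $\omega_1 \cap \omega_2 = \emptyset$. Because $\omega_i \subseteq \omega_i \cup \left\{ x \right\}$ and $f(\omega_i) = 1$ (as each $\omega_i$ is an atom), the monotonicity established in Lemma \eref{lem.subset.atom} gives $f(\omega_i \cup \left\{ x \right\}) = 1$ for $i = 1, 2$.

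The decisive step is the set-theoretic identity $(\omega_1 \cup \left\{ x \right\}) \cap (\omega_2 \cup \left\{ x \right\}) = (\omega_1 \cap \omega_2) \cup \left\{ x \right\} = \left\{ x \right\}$, valid precisely because the two atoms are disjoint. Applying the minimizing property of $f$ to the two non-empty sets $\omega_1 \cup \left\{ x \right\}$ and $\omega_2 \cup \left\{ x \right\}$ then yields
$$ f(\left\{ x \right\}) = f\big( (\omega_1 \cup \left\{ x \right\}) \cap (\omega_2 \cup \left\{ x \right\}) \big) = \min \left\{ f(\omega_1 \cup \left\{ x \right\}), f(\omega_2 \cup \left\{ x \right\}) \right\} = 1 , $$
contradicting the assumption $f(\left\{ x \right\}) = 0$. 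Hence $f(\left\{ x \right\}) = 1$ for every $x \in W$, and so every singleton is an atom of $f$.

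I do not expect a serious obstacle here; the only subtlety is recognizing that disjointness of the two chosen atoms is exactly what collapses the intersection down to $\left\{ x \right\}$, so the hypothesis $\vert \Upsilon \vert \geq 2$ is used in an essential way (with a single atom $\omega$ the same computation would merely recover $f(\omega) = 1$, not $f(\left\{ x \right\}) = 1$). I would also take care to verify that all three sets entering the minimizing identity are non-empty, so that they lie in the domain $P^{\star}(W)$ and the defining equation of a minimizing function legitimately applies.
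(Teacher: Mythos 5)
Your proof is correct. Note, however, that the paper never proves this proposition: it is quoted from \cite{Hooshmand.Unpub} as an imported result, so there is no in-paper argument to compare yours against. Judged on its own, your argument is valid and self-contained, using only tools stated earlier in the paper: the reduction of the claim to showing $f(\left\{ x \right\}) = 1$ (since a singleton has no proper non-empty subset, the second clause of definition \eref{def.atom} is vacuous), monotonicity from lemma \eref{lem.subset.atom} to get $f(\omega_i \cup \left\{ x \right\}) = 1$, disjointness of distinct atoms from proposition \eref{prop.disjointAtoms} to collapse $(\omega_1 \cup \left\{ x \right\}) \cap (\omega_2 \cup \left\{ x \right\}) $ to $\left\{ x \right\}$, and the minimizing identity applied to sets you verify are non-empty, as required by the domain $P^{\star}(W)$. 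Two minor remarks: the contradiction framing is superfluous, since your chain of equalities computes $f(\left\{ x \right\}) = 1$ directly for every $x \in W$; and your closing observation about where $\vert \Upsilon \vert \geq 2$ enters (with a single atom the same computation only recovers $f(\omega)=1$) is exactly the right sanity check, matching the paper's subsequent remark that a minimizing function has either no atoms, one atom, or an atom per element.
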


By previous propositions, it is clear that we either have no atoms, or exactly one atom or an atom per element.

\section{Topologies of $S_{\mathcal{MC}}$-approximations}\label{sec.topol}
In this paper, we are interested in topological structures over a special class of $S_{\mathcal{M}}$-approximations, where the $S$ relation is extended to $P(W) \times P(W) \rightarrow \{0,1\}$ and satisfies the complement condition, which is defined as:
\begin{equation}
\label{eq.s.complement.condition.lemma}
S(A, B^c) = 1 - S(A, B) ,
\end{equation}
for any $A, B \subseteq W$ and $S(A, \emptyset) = 0$. We use the notation $S_\mathcal{MC}$ to denote this class of $S_\mathcal{M}$-approximations.
\begin{lemma}
	\label{lem.s.w.s.empty}
	Let \srougheq $ $ be an $S_{\mathcal{MC}}$-approximation then for each $A \subseteq W$,
	\begin{align}
	\begin{aligned}
	S(A, W) = & 1 ,
	\end{aligned}
	\end{align}
	and
	\begin{align}
	\begin{aligned}
	\underline{G}(A) = \overline{G}(A) .
	\end{aligned}
	\end{align}
\end{lemma}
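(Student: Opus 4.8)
The plan is to derive both identities directly from the complement condition \eref{eq.s.complement.condition.lemma}; notably, the $S$-min structure inherited from the $S_\mathcal{M}$ class plays no role here, and only the extension of $S$ to all of $P(W) \times P(W)$ together with $S(A,\emptyset)=0$ is needed.

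First, to establish $S(A,W)=1$, I would observe that the complement of $W$ with respect to $W$ is $\emptyset$, and then instantiate the complement condition with $B = W$. This yields $S(A, \emptyset) = S(A, W^c) = 1 - S(A, W)$ for every $A \subseteq W$. Since the $S_\mathcal{MC}$ definition packages in the stipulation $S(A, \emptyset) = 0$, I solve $0 = 1 - S(A, W)$ to conclude $S(A,W) = 1$.

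Second, for $\underline{G}(A) = \overline{G}(A)$, I would simply unwind the two defining conditions from Definition \ref{grs.def.grs}. By definition $\overline{G}(A) = \{\, x \in U \mid S(T(x), A^c) = 0 \,\}$. Applying the complement condition to the pair $(T(x), A)$ gives $S(T(x), A^c) = 1 - S(T(x), A)$, so the test $S(T(x), A^c) = 0$ holds if and only if $S(T(x), A) = 1$, which is exactly the membership condition defining $\underline{G}(A)$. Hence the two sets agree elementwise, and the claimed equality follows.

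The honest assessment is that there is no deep obstacle: both statements are one-line consequences of the complement condition. The only points deserving care are bookkeeping rather than mathematical: one must use that the $S_\mathcal{MC}$ class extends $S$ to $P(W) \times P(W)$ so that $S(A, \emptyset)$ and $S(T(x), A^c)$ are defined even when an argument is empty, and one should keep straight that the complement condition is being applied at the specific instances $B = W$ and $B = A$ respectively. It is also worth verifying the degenerate case $A = \emptyset$, where $S(T(x), W) = 1$ forces $\overline{G}(\emptyset) = \emptyset$ and $S(T(x), \emptyset) = 0$ forces $\underline{G}(\emptyset) = \emptyset$, so the identity persists.
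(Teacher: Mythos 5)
Your proof is correct and follows essentially the same route as the paper: both parts are obtained by instantiating the complement condition \eref{eq.s.complement.condition.lemma} (your $B=W$ instance is just the paper's $B=\emptyset$ instance read in reverse) and then rewriting the membership test $S(T(x),A^c)=0$ as $S(T(x),A)=1$ to identify $\overline{G}(A)$ with $\underline{G}(A)$. Your extra remarks on the empty-set bookkeeping are sound but not needed beyond what the paper's argument already covers.
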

\begin{proof}
	By $S$-complement condition, we have $S(A, W) = 1 - S(A, \emptyset) = 1$.
	
	For the last part we have
	\begin{align}
	\begin{aligned}
	\gup{A} = & \left\{ x \in U \vert S(T(x) , A^c) = 0 \right\} \\
	= & \left\{ x \in U \vert S(T(x) , A) = 1 \right\} \\
	= & \glo{A} .
	\end{aligned}
	\end{align}			
\end{proof}

Assume $G=$\srough $ $ is an $S_{\mathcal{MC}}$-approximation, then if we define $\tau$ as $\left\{ \gup{ A } \vert A \subseteq W \right\}$,  $(U,\tau)$ becomes a topology. This claim is stated more precisely in the following theorem.
\begin{theorem}
	\label{the.s.rough.gup.topology}
	Let \srougheq $ $ be an $S_{\mathcal{MC}}$-approximation, and $\tau$ be defined as
	\begin{equation}
	\label{eq.tau.gup}
	\tau = \left\{ \gup{ A } \vert A \subseteq W \right\} .
	\end{equation}
	Then $(U, \tau)$ is a topology.
\end{theorem}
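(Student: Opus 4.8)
The plan is to verify the three defining axioms of a topology for $\tau$: that it contains $\emptyset$ and $U$, that it is closed under arbitrary unions, and that it is closed under finite intersections. The crucial structural observation, which I would record first, is that since $W$ is finite the family $\tau = \left\{ \gup{A} \mid A \subseteq W \right\}$ is itself finite, having at most $2^{\vert W \vert}$ members. Consequently, closure under arbitrary unions reduces to closure under finite unions, which sidesteps any issue with infinite index sets and turns out to be the conceptual crux of the whole argument.

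Next I would exhibit $\emptyset$ and $U$ as explicit members of $\tau$. Taking $A = W$ gives $\gup{W} = \left\{ x \in U \mid S(T(x), \emptyset) = 0 \right\}$; since the $S_{\mathcal{MC}}$ hypothesis stipulates $S(A, \emptyset) = 0$ for all $A$, this yields $\gup{W} = U$. Taking $A = \emptyset$ gives $\gup{\emptyset} = \left\{ x \in U \mid S(T(x), W) = 0 \right\}$, and by Lemma \eref{lem.s.w.s.empty} we have $S(A, W) = 1$ for every $A$, so $\gup{\emptyset} = \emptyset$. At this step I am relying on the extension of $S$ to all of $P(W) \times P(W)$, which is exactly what makes $\gup{A}$ well-defined when $A$ or its complement is empty.

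For closure under finite unions I would invoke property $(3)$ of Proposition \eref{s.app.prop}, namely $\gup{A \cup B} = \gup{A} \cup \gup{B}$; since $A \cup B \subseteq W$, the set $\gup{A} \cup \gup{B}$ lies in $\tau$, and a trivial induction extends this to any finite union. For finite intersections the key is again Lemma \eref{lem.s.w.s.empty}, which gives $\glo{A} = \gup{A}$ for every $A$ in an $S_{\mathcal{MC}}$-approximation. Combining this identity with property $(4)$ of Proposition \eref{s.app.prop}, $\glo{A \cap B} = \glo{A} \cap \glo{B}$, produces $\gup{A \cap B} = \gup{A} \cap \gup{B}$, so $\gup{A} \cap \gup{B} \in \tau$, and induction again handles arbitrary finite intersections.

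The main obstacle is not any single calculation but the arbitrary-union axiom: a priori a topology must be closed under unions over index sets of any cardinality, whereas Proposition \eref{s.app.prop} only supplies a binary union formula. The resolution is the finiteness of $\tau$ noted at the outset, which collapses the infinitary requirement to the finitary one. The only other point demanding care is ensuring the two boundary sets $\emptyset$ and $U$ genuinely occur in $\tau$, and this is precisely where the complement condition $S(A,\emptyset)=0$ together with its consequence $S(A,W)=1$, and the extension of the domain of $S$ to the empty set, become indispensable. Once these observations are assembled, all three axioms follow at once and $(U,\tau)$ is a topology.
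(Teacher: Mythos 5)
Your proof is correct, and it follows the same skeleton as the paper's: verify the three axioms, with $\emptyset, U \in \tau$ obtained exactly as in the paper from $S(\cdot\,,\emptyset)=0$ and $S(\cdot\,,W)=1$. The difference lies in how the two closure axioms are discharged. For arbitrary unions, the paper works element-wise with an arbitrary index family $\left\{ A_i \right\}_{i \in \mathcal{I}}$, converting $\vee_{i}\left( S(T(x),A_i^c)=0 \right)$ into $\min_{i\in\mathcal{I}} S(T(x),A_i^c)=0$ and then into $S(T(x),\cap_{i} A_i^c)=0$; that last step silently extends the binary $S$-min condition to an arbitrary family, which is legitimate only because $W$ is finite (the paper merely remarks on finiteness at the end of the step). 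Your reduction --- $\tau$ has at most $2^{\cardinal{W}}$ members, so any union of members of $\tau$ is in fact a finite union, after which property (3) of Proposition \eref{s.app.prop} plus induction finishes --- makes that use of finiteness explicit up front, and is arguably the cleaner way to tame the infinitary axiom. For finite intersections, the paper runs a direct chain of equivalences using the complement condition, whereas you obtain the same identity $\gup{A} \cap \gup{B} = \gup{A \cap B}$ by composing Lemma \eref{lem.s.w.s.empty} (that $\glo{A}=\gup{A}$) with property (4) of Proposition \eref{s.app.prop}; these are the same computation packaged differently, since that lemma is itself just the complement condition in disguise. In short, the paper recomputes everything from the $S$-min and complement conditions, while you assemble the theorem from the already-established lemmas; both routes prove the same statement, and yours makes the logical role of finiteness in the union axiom more transparent.
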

\begin{proof}
	According to definition of topology, $(U, \tau)$ should satisfy three conditions.
	\begin{enumerate}
		\item We claim that $\gup{W} = U$ and $\gup{\emptyset} = \emptyset$, so $U$ and $\emptyset$ belong to $\tau$. By definition of $\gup{X}$, we have
		\begin{align}\label{eq.proof.topol.1.1}\begin{aligned}
		\gup{W} = & \left\{ x \in U \vert S(T(x), W^c) = 0 \right\} \\
		= & \left\{ x \in U \vert S(T(x), \emptyset) = 0 \right\} \\
		= & U .
		\end{aligned}\end{align}
		and
		\begin{align}\label{eq.proof.topol.1.2}\begin{aligned}
		\gup{\emptyset} = & \left\{ x \in U \vert S(T(x), \emptyset^c) = 0 \right\} \\
		= & \left\{ x \in U \vert S(T(x), W) = 0 \right\} \\
		= & \emptyset .
		\end{aligned}\end{align}
		\item It can be easily seen from the definition of $\tau$ that for each $Y_i \in \tau$, where $i$ is in some index set $\mathcal{I}$, there exists $A_i \subseteq W$ such that $Y_i = \gup{A_i}$. So, it is the case that
		\begin{align}\label{eq.property.2.gup}\begin{aligned}
		\cup_{i \in \mathcal{I}} Y_i = & \cup_{i \in \mathcal{I}} \gup{A_i} \\
		= & \cup_{i \in \mathcal{I}} \left\{ x \in U \vert S(T(x), A_i^c) = 0 \right\} \\							
		= & \left\{ x \in U \vert \vee_{i \in \mathcal{I}} \left( S(T(x), A_i^c) = 0 \right) \right\} \\
		= & \left\{ x\in U \vert \min_{i \in \mathcal{I}} \left\{ S(T(x), A_i^c) \right\} = 0 \right\} \\
		= & \left\{ x\in U \vert S(T(x), \cap_{i \in \mathcal{I}} A_i^c) = 0 \right\} \\
		= & \gup{\cup_{i \in \mathcal{I}} A_i} . 
		\end{aligned}\end{align}
		The latter equality is obtained by theorem \eref{s.app.prop}, property $(1)$. Therefore $\cup_{i \in \mathcal{I}} \gup{A_i} \in \tau$. Note that $U$ and $W$ are assumed to be finite.
		\item
		We have
		\begin{align}
		\begin{aligned}
		x \in \cap_{i=1}^n \gup{A_i} \Leftrightarrow & \forall i\in \{1, \ldots , n\}~~ \:\: S(T(x), A_i^c) = 0 \\
		\Leftrightarrow & \forall i \in \{1, \ldots , n\}~~ \:\: S(T(x), A_i) = 1 \\
		\Leftrightarrow & S(T(x), \cap_{i=1}^n A_i) = 1 \\
		\Leftrightarrow & S(T(x), (\cap_{i=1}^n A_i)^c) = 0 \\
		\Leftrightarrow & x \in \gup{\cap_{i=1}^n A_i} .
		\end{aligned}
		\end{align}
		Therefore $\cap_{i=1}^n \gup{A_i} \in \tau$.
	\end{enumerate}
\end{proof}
If $S$ belongs to $S_{\mathcal{MC}}$, then $\gup{A} = \glo{A}$ for every $A \subseteq W$. From theorems \eref{lem.s.w.s.empty} and \eref{the.s.rough.gup.topology}, the following corollary is obtained, i.e. $(U, \tau)$ is also a topology when $\tau = \left\{ \glo{A} \vert A \subseteq W \right\}$.
\begin{corollary}
	\label{cor.glo.topol}
	Let \srougheq $ $ be an $S_{\mathcal{MC}}$-approximation. Define $\tau = \left\{ \glo{A} \vert A \subseteq W \right\}$, then $(U, \tau)$ is the same topology as in theorem \eref{the.s.rough.gup.topology}.
\end{corollary}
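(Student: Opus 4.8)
The plan is to reduce the statement to the pointwise identity already recorded in Lemma \eref{lem.s.w.s.empty}. Since that lemma asserts $\glo{A} = \gup{A}$ for \emph{every} $A \subseteq W$, the two families of subsets of $U$,
$$ \left\{ \glo{A} \vert A \subseteq W \right\} \quad \text{and} \quad \left\{ \gup{A} \vert A \subseteq W \right\} , $$
are not merely isomorphic as lattices but are literally the same collection of sets: each member of one is, verbatim, a member of the other. Hence there is nothing to recheck about the topology axioms — the work has already been done in Theorem \eref{the.s.rough.gup.topology}.

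Concretely, I would proceed in three short steps. First, fix an arbitrary $A \subseteq W$ and invoke Lemma \eref{lem.s.w.s.empty} to obtain $\glo{A} = \gup{A}$; since $A$ was arbitrary, this shows the two defining families coincide as subfamilies of the power set of $U$. Second, I would recall that Theorem \eref{the.s.rough.gup.topology} already proves $\left\{ \gup{A} \vert A \subseteq W \right\}$ is a topology on $U$. Third, substituting the equality from the first step, the family $\tau = \left\{ \glo{A} \vert A \subseteq W \right\}$ equals that topology, so it is a topology and is the very same topology — which is exactly the assertion of the corollary.

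There is essentially no technical obstacle here; the entire content of the corollary lives in Lemma \eref{lem.s.w.s.empty}, and the only point requiring a moment's care is that the claimed coincidence is an honest equality of set families rather than a mere bijection between them. This is guaranteed because $\glo{A} = \gup{A}$ holds for each individual $A$, including the boundary cases $A = \emptyset$ and $A = W$, where the $S$-complement condition together with $S(A,\emptyset)=0$ force $\gup{\emptyset} = \glo{\emptyset} = \emptyset$ and $\gup{W} = \glo{W} = U$. Consequently the two indexings of $\tau$ by the subsets of $W$ produce identical open sets term by term, and the identification of the two topologies is immediate.
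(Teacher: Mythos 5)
Your proposal is correct and matches the paper's own reasoning exactly: the paper derives this corollary directly from Lemma \ref{lem.s.w.s.empty} (which gives $\glo{A}=\gup{A}$ for every $A\subseteq W$, so the two defining families are literally the same collection of sets) together with Theorem \ref{the.s.rough.gup.topology}. Your extra check of the boundary cases $A=\emptyset$ and $A=W$ is harmless but already subsumed by the lemma.
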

The set $\tau$ has the property that it is closed under complement.
\begin{theorem}
	Let $(U, \tau)$ be a topology obtained by theorem \eref{the.s.rough.gup.topology}, then $(U,\tau)$ is a \emph{clopen} topology, i.e. every open set is closed.
\end{theorem}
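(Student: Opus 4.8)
The plan is to show that $\tau$ is closed under complementation, since a topology in which every open set also has open complement is precisely one in which every open set is clopen. A set is closed exactly when its complement is open, so it suffices to prove that for each $A \subseteq W$ the complement $(\gup{A})^c$ again lies in $\tau$.

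First I would exploit the duality recorded in property $(9)$ of proposition \eref{s.app.prop}, namely $\overline{G}(A) = (\underline{G}(A^c))^c$. Replacing $A$ by $A^c$ there gives $\gup{A^c} = (\glo{A})^c$. Then I would invoke Lemma \eref{lem.s.w.s.empty}, which guarantees $\glo{A} = \gup{A}$ for every $A \subseteq W$ in an $S_{\mathcal{MC}}$-approximation. Substituting this identity yields $\gup{A^c} = (\gup{A})^c$. Since $A^c \subseteq W$, the set $\gup{A^c}$ is by definition a member of $\tau$, so the complement of the arbitrary open set $\gup{A}$ is open; hence $\gup{A}$ is closed, and as $A$ was arbitrary, $(U,\tau)$ is clopen.

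Alternatively, I could bypass property $(9)$ and argue directly from the complement condition \eref{eq.s.complement.condition.lemma}: for $x \in U$, $x \notin \gup{A}$ means $S(T(x), A^c) = 1$, which by the complement condition is equivalent to $S(T(x), A) = 0$, i.e. $S(T(x), (A^c)^c) = 0$, and this is exactly the defining condition for $x \in \gup{A^c}$. Either route produces the same complement identity $(\gup{A})^c = \gup{A^c}$.

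The argument is short and I expect no substantial obstacle; the only point requiring care is the bookkeeping of complements when applying the duality, making sure the new index $A^c$ is still a subset of $W$ so that $\gup{A^c}$ genuinely belongs to $\tau$, and that Lemma \eref{lem.s.w.s.empty} is applied to $A$ rather than to $A^c$ at the right stage.
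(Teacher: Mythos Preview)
Your proposal is correct and follows essentially the same route as the paper: both combine property~(9) of Proposition~\eref{s.app.prop} with Lemma~\eref{lem.s.w.s.empty} to obtain $(\gup{A})^c = \gup{A^c} \in \tau$. The paper additionally treats $A = W$ separately and writes out the converse direction (every closed set is open), but as you note this is redundant once one has shown $\tau$ is closed under complementation; your alternative direct calculation from the complement condition~\eref{eq.s.complement.condition.lemma} is also valid and in fact slightly cleaner.
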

\begin{proof}
	Let $A \subset W$, then by \eref{s.app.prop}, $\glo{A} = (\gup{A^c})^c$, and since by theorem \eref{lem.s.w.s.empty}, $\gup{A}  = \glo{A}$, so $\gup{A}$ is closed.
	$\gup{W} = U$ and $\emptyset = U^c \in \tau$, therefore $\gup{W}$ is also closed.
	
	Let $Y$ be a non-empty closed subset of $U$, so $Y=(\gup{A})^c$ for some $A \subset W$, and since $(\gup{A})^c = \glo{A^c} = \gup{A^c}$, $Y$ is open. It is obvious that $\emptyset$ is open. This concludes the proof.
\end{proof}

\section{Enumerating $S$ Functions in $S_{\mathcal{MC}}$}\label{sec.numberTopol}
In this section, we suppose that $U$, $W$, and $T:U \rightarrow \mathcal{P}^*(W)$ are fixed and then we enumerate all the functions  $S$ where $G=(U,W,T,S)$ is an $S_{\mathcal{MC}}$.

Let $W$ be a non-empty finite set, and $f:P(W) \rightarrow \Bset$ a minimizing function with a single atom, denoted by $\alpha(f)$. If $\cardinal{\alpha(f)} = 1$, then its only element is denoted by ${\boldsymbol{a}}(f)$. The following theorems state the effect of the $S$-min condition, and a much stricter version of it, on $S$ in terms of its atoms and tend to be very useful tools in counting non-homeomorphic topologies, as we will discuss later.
\begin{theorem}
	\label{the.single.atom}
	Let $W$ be a non-empty finite set, and $f:P(W) \rightarrow \Bset$ a minimizing function. Then the following are equivalent:
	\begin{enumerate}
		\item For every non-empty subset $A$ of $W$, \begin{align}
		\label{eq.the.complement.leq}
		\begin{aligned}
		f(A^c) \leq 1 - f(A) .
		\end{aligned}
		\end{align}
		\item Either $f$ has a single atom or $f \equiv 0$, i.e. $f$ has no atoms.
	\end{enumerate}
\end{theorem}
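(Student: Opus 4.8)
The plan is to prove the equivalence by leaning on the complete classification of minimizing functions by their atoms established in the preceding propositions, namely that $f$ has either no atoms, a single atom, or an atom at every singleton. Since statement (2) is exactly the disjunction of the first two cases, I would establish $(2) \Rightarrow (1)$ by verifying \eref{eq.the.complement.leq} directly in those two admissible cases, and $(1) \Rightarrow (2)$ by ruling out the remaining ``atom per element'' case. Throughout I would use the elementary reformulation of \eref{eq.the.complement.leq}: since $f$ takes values in $\Bset$, the inequality $f(A^c) \le 1 - f(A)$ simply forbids $f(A) = f(A^c) = 1$ from holding simultaneously.

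For $(2) \Rightarrow (1)$ the case $f \equiv 0$ is immediate, as $f(A^c) = 0 \le 1 - 0$. If $f$ has a single atom $\omega$, then Proposition \eref{prop.structure} gives $f(X) = 1$ precisely when $\omega \subseteq X$. Hence whenever $f(A) = 1$ for a non-empty $A$ we have $\omega \subseteq A$, so $\omega \cap A^c = \emptyset$; as $\omega$ is non-empty this forces $\omega \not\subseteq A^c$ and therefore $f(A^c) = 0$, which is exactly \eref{eq.the.complement.leq}. When $f(A) = 0$ the inequality is trivial.

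For $(1) \Rightarrow (2)$ I would argue by contradiction, assuming $f$ lies in the remaining branch of the classification, i.e. it has at least two distinct atoms. By the proposition asserting that $\cardinal{\Upsilon} \ge 2$ forces every singleton to be an atom, each $\{x\}$ satisfies $f(\{x\}) = 1$, and in particular $\cardinal{W} \ge 2$. Choosing any $x \in W$ and a second element $y \ne x$, the complement $\{x\}^c$ contains the singleton atom $\{y\}$, so $f(\{x\}^c) = 1$ again by Proposition \eref{prop.structure}; combined with $f(\{x\}) = 1$ this violates \eref{eq.the.complement.leq} at $A = \{x\}$. The contradiction eliminates the multi-atom branch and leaves only the no-atom or single-atom cases, which is precisely (2).

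The one delicate point I anticipate is the boundary instance $A = W$, where $A^c = \emptyset$ falls outside $P^{\star}(W)$; I would dispose of it by recording the convention $f(\emptyset) = 0$ (equivalently, the value $f(\emptyset) \le f(X)$ forced by minimality), under which \eref{eq.the.complement.leq} holds automatically there. No genuine obstacle arises: the entire weight of the argument is carried by the atom structure theorem, so once the trichotomy is invoked the verification in each branch is routine.
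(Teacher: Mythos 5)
Your proof is correct and takes essentially the same approach as the paper's: both directions hinge on the atom trichotomy, with $(1 \Rightarrow 2)$ ruling out the multi-atom case by exhibiting a non-empty proper subset $A$ with $f(A) = f(A^c) = 1$, and $(2 \Rightarrow 1)$ verified through the single-atom structure given by Proposition \eref{prop.structure}. The only differences are cosmetic: you are slightly more careful than the paper, explicitly covering the $f \equiv 0$ case and the boundary instance $A = W$ where $A^c = \emptyset$.
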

\begin{proof}
	\begin{description}
		\item[$(1 \rightarrow 2)$] The proof is by contradiction. Suppose that $f$ does not satisfy $(2)$, then $\vert W \vert \geq 2$ and each unary subset of $W$ is an atom of $f$. This way, $f(A) = 1$ for every non-empty subset of $W$ and $f(\emptyset) = 0$. Suppose $A$ is a non-empty proper subset of $W$. Since $A$ has at least one element and every unary set of $W$ is an atom, then $f(A) = 1$. On the other hand, $A^c$ is non-empty and for the same reason, $f(A^c) = 1$ which contradicts $(1)$.
		\item[$(2 \rightarrow 1)$] Suppose $f(A) = 1$ for some non-empty $A \subseteq W$. So $\alpha(f) \subseteq A$, which implies $\alpha(f) \not\subseteq A^c$, hence $f(A^c) =0$. This obviously yields to $(1)$.
	\end{description}
\end{proof}

\begin{theorem}
	\label{the.single.atom.single.member}
	Let $W$ be a non-empty finite set, and $f:P(W) \rightarrow \Bset$ a minimizing function. Then the following are equivalent:
	\begin{enumerate}
		\item For every subset $A$ of $W$, \begin{align}
		\label{eq.the.complement.eq}
		\begin{aligned}
		f(A^c) = 1 - f(A) .
		\end{aligned}
		\end{align}
		\item $f$ has a single atom and $\vert \alpha(f) \vert = 1$.
	\end{enumerate}
\end{theorem}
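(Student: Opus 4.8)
The plan is to prove the two implications separately, leaning on Theorem~\eref{the.single.atom} to do most of the work for the forward direction. The key observation is that the equality in \eref{eq.the.complement.eq} immediately implies the inequality \eref{eq.the.complement.leq} for every non-empty $A$, so Theorem~\eref{the.single.atom} already guarantees that, under condition $(1)$, $f$ either has a single atom or is identically zero. Thus for $(1\rightarrow 2)$ it only remains to discard the case $f\equiv 0$ and to show that the unique atom is a singleton. Throughout I will use the convention $f(\emptyset)=0$ inherited from the $S_{\mathcal{MC}}$ extension, which is what makes \eref{eq.the.complement.eq} meaningful at $A=\emptyset$ and $A=W$.

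For $(2\rightarrow 1)$ I would start from the structure result, Proposition~\eref{prop.structure}: since $f$ has the single atom $\alpha(f)=\{\boldsymbol{a}(f)\}$, we have $f(X)=1$ if and only if $\{\boldsymbol{a}(f)\}\subseteq X$, i.e. if and only if $\boldsymbol{a}(f)\in X$. The point is that for any $A\subseteq W$ the element $\boldsymbol{a}(f)$ lies in exactly one of $A$ and $A^c$; hence exactly one of $f(A),f(A^c)$ equals $1$ and the other equals $0$, which is precisely \eref{eq.the.complement.eq}. The boundary cases $A=\emptyset$ and $A=W$ are consistent with $f(\emptyset)=0$ and $f(W)=1$.

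For $(1\rightarrow 2)$, after invoking Theorem~\eref{the.single.atom} as above, I would first rule out $f\equiv 0$: evaluating \eref{eq.the.complement.eq} at $A=\emptyset$ gives $f(W)=1-f(\emptyset)=1$, which is impossible if $f$ vanishes everywhere. Hence $f$ has a unique atom $\omega=\alpha(f)$. Suppose, for contradiction, that $\vert \omega \vert \geq 2$, and choose a proper non-empty subset $A\subsetneq\omega$ (which exists exactly because $\vert \omega \vert \geq 2$). By Definition~\eref{def.atom} we have $f(A)=0$; on the other hand, since $A\subseteq\omega$ and $A\neq\emptyset$, some element of $\omega$ lies in $A$ and therefore not in $A^c$, so $\omega\not\subseteq A^c$, and Proposition~\eref{prop.structure} forces $f(A^c)=0$ as well. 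This contradicts \eref{eq.the.complement.eq}, which demands $f(A^c)=1-f(A)=1$. Therefore $\vert \omega \vert =1$, completing the implication.

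The only genuinely delicate points are bookkeeping ones: fixing the value $f(\emptyset)=0$ so that the identity makes sense on all of $P(W)$, and guaranteeing the existence of a proper non-empty subset of the atom, which is exactly what fails when $\vert \omega \vert =1$ and is available when $\vert \omega \vert \geq 2$. Everything else reduces to the membership characterization of $f$ provided by Proposition~\eref{prop.structure}, so I do not expect a substantial obstacle beyond these edge cases.
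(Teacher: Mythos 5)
Your proposal is correct and follows essentially the same route as the paper: both directions rest on Theorem~\ref{the.single.atom} (after ruling out $f \equiv 0$ via the complement condition at $\emptyset$, which is what the paper's citation of Lemma~\ref{lem.s.w.s.empty} amounts to), a contradiction argument producing a set $A$ with $f(A) = f(A^c) = 0$ when the atom has more than one element (the paper picks a singleton $\{\boldsymbol{\alpha}_1\} \subsetneq \alpha(f)$ where you allow any proper non-empty subset), and the observation that $\boldsymbol{a}(f)$ lies in exactly one of $A$, $A^c$ for the converse. No gaps; the differences are only cosmetic.
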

\begin{proof}
	\begin{description}
		\item[$(1 \rightarrow 2)$] By lemma \eref{lem.s.w.s.empty}, $f \not\equiv 0$, so by theorem \eref{the.single.atom}, $f$ has a single atom $\alpha(f)$. Now we should show that $\vert \alpha(f) \vert = 1$. The proof is by contradiction. Suppose that $\vert \alpha(f) \vert > 1$ and ${\boldsymbol{\alpha}}_1 \in \alpha(f)$. Then $f(\left\{ {\boldsymbol{\alpha}}_1 \right\}) = 0$. On the other hand, $f(\left\{ {\boldsymbol{\alpha}}_1 \right\}^c) = 0$ since ${\boldsymbol{\alpha}}_1 \in \alpha(f)$. This is a contradiction with $(1)$.
		\item[$(2 \rightarrow 1)$] Recall that if $\vert \alpha(f) \vert = 1$, then ${\boldsymbol{a}}(f)$ denotes its only element. Suppose $f(A) = 1$ for some $A \subseteq W$. So ${\boldsymbol{a}}(f) \in A$, which implies ${\boldsymbol{a}}(f) \not\in A^c$, hence $f(A^c) =0$. This statement can be reversed, so $(1)$ holds.
	\end{description}
\end{proof}

Theorem \eref{the.single.atom.single.member} makes it easy to count the number of different functions $S$ in $S_\mathcal{MC}$ that can be used to define topologies as stated in theorem \eref{the.s.rough.gup.topology}, this number is clearly an upper-bound for the number of different topologies that can be formed as in that theorem.
\begin{theorem}
	\label{the.num.srough.topologies}
	Let \srougheq be an $S$-approximation, and fix the sets $U$, $W$, and the relation $T$. Then the number of different $S$ functions that can be used in order for $G$ to be in the $S_\mathcal{MC}$ equals $\cardinal W^{2^{\cardinal W}-1}$.
\end{theorem}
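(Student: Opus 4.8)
The plan is to reduce the choice of an $S$ function making $G$ an $S_{\mathcal{MC}}$-approximation to a collection of independent, easily counted choices, one per non-empty subset of $W$. Note first that $T$ plays no role here, since the defining conditions of $S_{\mathcal{MC}}$ constrain $S$ alone. I would begin by applying lemma \eref{lem.minPiecewise}: labelling the non-empty subsets of $W$ as $A_1, \ldots, A_{2^{\cardinal W}-1}$, an $S_{\mathcal{M}}$-approximation is exactly recorded by the family of ``row'' functions $f_i := S(A_i, \cdot)$, each a minimizing function on $P(W)$. The crucial structural remark is that both conditions defining $S_{\mathcal{MC}}$ --- the $S$-min condition $S(A, B \cap C) = \min\{S(A,B), S(A,C)\}$ and the complement condition $S(A, B^c) = 1 - S(A,B)$ together with $S(A, \emptyset) = 0$ --- quantify over a single fixed first argument at a time. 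Consequently the constraints decouple across the index $i$, and $S$ lies in $S_{\mathcal{MC}}$ precisely when each $f_i$ is a minimizing function satisfying $f_i(B^c) = 1 - f_i(B)$ for every $B \subseteq W$.

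Next I would count the admissible row functions. By theorem \eref{the.single.atom.single.member}, a minimizing function $f$ satisfies the complement condition if and only if it has a single atom $\alpha(f)$ with $\cardinal{\alpha(f)} = 1$. By proposition \eref{prop.structure} such an $f$ is completely determined by this atom: writing $\alpha(f) = \{a\}$, one has $f(X) = 1$ exactly when $a \in X$. As the atom ranges over all singletons $\{a\}$ with $a \in W$, and distinct singletons give distinct functions, there are exactly $\cardinal W$ admissible row functions.

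Finally I would assemble the product count. Since the conditions were shown to hold row by row, any assignment of an admissible row function to each of $A_1, \ldots, A_{2^{\cardinal W}-1}$ reconstitutes, via $S(A_i, B) := f_i(B)$, a genuine $S_{\mathcal{MC}}$-approximation; conversely every such $S$ arises this way by lemma \eref{lem.minPiecewise}. This makes the correspondence a bijection onto the set of all $\bigl(2^{\cardinal W}-1\bigr)$-tuples of admissible row functions, and the independence of the coordinates gives the count $\cardinal W^{2^{\cardinal W}-1}$.

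The step I expect to be the main obstacle is the converse half of this bijection: lemma \eref{lem.minPiecewise} readily supplies the forward map, but I must verify that an arbitrary assignment of single-singleton-atom minimizing functions to the non-empty first arguments really does globally satisfy the min and complement conditions. This rests entirely on the observation that neither condition couples two different first arguments, so checking them separately on each row suffices and no cross-row interaction can invalidate the assembled $S$. A minor point to settle alongside this is the domain of the first argument: the exponent $2^{\cardinal W}-1$ reflects that only the non-empty first arguments (those of the form $T(x)$) enter, so the empty first argument contributes no extra factor.
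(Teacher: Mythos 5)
Your proposal is correct and takes essentially the same route as the paper's own (much terser) proof: decompose $S$ into row functions via lemma \eref{lem.minPiecewise}, use theorem \eref{the.single.atom.single.member} to identify the admissible rows as the $\cardinal{W}$ minimizing functions with a single singleton atom, and apply the multiplication principle over the $2^{\cardinal{W}}-1$ non-empty first arguments. The details you add --- the decoupling of the min and complement conditions across rows, the determination of each row by its atom via proposition \eref{prop.structure}, and the convention that only non-empty first arguments are counted --- are exactly what the paper's one-line proof leaves implicit.
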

\begin{proof}
	This number can be obtained using the multiplication principle since there are exactly $\cardinal W$ minimizing functions $f : P(W) \rightarrow \Bset$ that have a single atom $\alpha(f)$ such that $\vert \alpha(f) \vert = 1$.
\end{proof}
\begin{remark}
	It is notable that the number obtained above is an upper bound on the number of distinct topologies with fixed $U$, $W$, and $T$.
\end{remark}

\section{Non-Homeomorphic Topologies of $S_{\mathcal{MC}}$-approximations}\label{sec.hom.topol}
In this section, we would first establish a necessary and sufficient condition so that two topologies $(U, \tau)$ and $(U', \tau^{\prime})$ generated by two $S_\mathcal{MC}$-approximations \srougheq and $G'=(U', W', T', S')$ respectively as in theorem \eref{the.s.rough.gup.topology} such that $\cardinal{U} = \cardinal{U'}$ and $\cardinal{W} = \cardinal{W'}$, are homeomorphic. Then we will use this condition to count such non-homeomorphic topologies.
\begin{lemma}
	\label{lem.homo.same.size}
	Let $(U,\tau)$ and $(U',\tau^{\prime})$ be two homeomorphic topologies, $\Phi : U \rightarrow U' $, a homeomorphism between them, $u \in U$, $u' = \Phi(u)$, and $A \in \tau$ an open set containing $u$, then $\Phi(A)$ contains $u'$ and has the same cardinality as $A$.
\end{lemma}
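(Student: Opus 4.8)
The plan is to observe that the entire statement follows from the bijectivity of $\Phi$ alone; neither the continuity of $\Phi$ nor that of $\Phi^{-1}$, nor indeed any property of the two topologies, is actually needed. First I would recall that, being a homeomorphism, the map $\Phi : U \rightarrow U'$ is in particular a bijection. Everything else is elementary set theory applied to the image of $A$ under this bijection.

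For the membership claim, the argument is immediate: since $u \in A$ and $u' = \Phi(u)$ by hypothesis, the definition of the image set yields $u' = \Phi(u) \in \Phi(A)$ directly, with no further work required.

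For the cardinality claim, I would consider the restriction $\Phi|_A : A \rightarrow \Phi(A)$. This map is surjective by the very definition of $\Phi(A)$ as the image of $A$, and it is injective because $\Phi$ is injective on all of $U$, hence on the subset $A$. Consequently $\Phi|_A$ is a bijection between $A$ and $\Phi(A)$, which gives $\cardinal{A} = \cardinal{\Phi(A)}$. (Since $U$ is assumed finite, ``same cardinality'' here simply means the same number of elements, but the bijective argument is insensitive to this.)

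The honest assessment is that there is no genuine obstacle: the statement is a convenient packaging of two elementary facts about bijections, isolated here because it will be invoked repeatedly in the forthcoming homeomorphism-counting arguments, where one tracks how a homeomorphism moves points within open sets of a prescribed size. The only subtlety worth flagging is the temptation to overthink it by appealing to topological structure; I would deliberately avoid that and lean solely on bijectivity.
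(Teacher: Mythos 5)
Your proof is correct, and it supplies exactly the elementary bijectivity argument that the paper itself omits --- the paper's entire proof of this lemma is the single line ``It is straightforward.'' Your observation that only bijectivity of $\Phi$ (not continuity in either direction) is needed is accurate and matches the intended, trivial content of the lemma.
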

\begin{proof}
	It is straightforward.
\end{proof}
For the sake of easier stating the proof of theorem \eref{the.hom.topol}, we introduce the notion of degree for each element of $W$.
\begin{definition}
	\label{def.degree}
	Let $(U, \tau)$ be a topology as in theorem \eref{the.s.rough.gup.topology}, then the degree of $w \in W$ is defined as
	\begin{equation}
	\label{eq.def.degree.element}
	\deg_G(w) = \vert \left\{ u \in U \vert \alpha(f_{T(u)}) = \left\{ w \right\} \right\} \vert .
	\end{equation}
	Also, the set $W_i$, where $i$ is a non-negative integer, is defined as
	\begin{equation}
	\label{eq.w.i.g.set}
	W_i = \left\{ w \in W \vert \deg_G (w) = i \right\} .
	\end{equation}
\end{definition}
\begin{lemma}
	The set of $W_i$'s, as defined in definition \eref{def.degree}, is a partition of $W$.
\end{lemma}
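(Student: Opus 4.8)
The plan is to recognize the sets $W_i$ as the level sets (fibers) of a single well-defined integer-valued function on $W$, so that the partition property reduces to the standard fact that the fibers of a function cover their domain disjointly. The only genuine content is the preliminary verification that this function is well defined, i.e. that each $u \in U$ contributes to the degree of exactly one element of $W$.

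First I would verify the well-definedness of $\deg_G$ in \eref{eq.def.degree.element}. For each $u \in U$ the function $f_{T(u)}$ is minimizing by Lemma \eref{lem.minPiecewise}, and since $G$ is an $S_{\mathcal{MC}}$-approximation the complement condition \eref{eq.s.complement.condition.lemma} yields $f_{T(u)}(A^c) = 1 - f_{T(u)}(A)$ for every $A \subseteq W$. This is precisely hypothesis $(1)$ of Theorem \eref{the.single.atom.single.member}, which therefore forces $f_{T(u)}$ to possess a single atom $\alpha(f_{T(u)})$ with $\cardinal{\alpha(f_{T(u)})} = 1$. Consequently $\alpha(f_{T(u)})$ is a singleton $\{w\}$ for a uniquely determined $w \in W$, so the set counted in \eref{eq.def.degree.element} is unambiguous and $\deg_G(w)$ is a well-defined non-negative integer for every $w$.

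Next I would read \eref{eq.def.degree.element} as a function $\deg_G : W \rightarrow \{0, 1, \ldots, \cardinal{U}\}$, the finiteness of $U$ guaranteeing this range, and observe that by \eref{eq.w.i.g.set} we have $W_i = \deg_G^{-1}(i)$. Disjointness is then immediate: if $w \in W_i \cap W_j$ then $i = \deg_G(w) = j$, so distinct indices yield disjoint blocks. For the covering property, every $w \in W$ has a well-defined degree $\deg_G(w) = i$ and hence lies in $W_i$, giving $\bigcup_{i \geq 0} W_i = W$. Discarding the finitely many empty $W_i$ (a partition consists of non-empty blocks) leaves a pairwise disjoint family of non-empty sets whose union is $W$, which is exactly a partition.

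I do not anticipate a real obstacle, as the argument collapses to the elementary principle that the fibers of a function partition its domain. The one point requiring care is the preliminary reduction, namely that Theorem \eref{the.single.atom.single.member} applies to every $f_{T(u)}$; this in turn relies on Lemma \eref{lem.s.w.s.empty} to exclude the degenerate case $f_{T(u)} \equiv 0$, thereby ensuring that each $u$ genuinely selects a single element of $W$ rather than none.
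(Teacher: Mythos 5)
Your proposal is correct. The paper states this lemma without any proof at all (treating it as immediate), and your argument — that the $W_i$ are the fibers of the degree function, which is well defined because Theorem \eref{the.single.atom.single.member} together with the complement condition \eref{eq.s.complement.condition.lemma} forces each $f_{T(u)}$ to have exactly one singleton atom — supplies precisely the justification the paper leaves implicit, with the well-definedness check being the only substantive point.
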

\begin{lemma}
	\label{lem.deg.g.eq.card.gup}
	Let \srougheq $ $ be an $S_\mathcal{MC}$-approximation that forms a topology as in theorem \eref{the.s.rough.gup.topology}. Then for each $w \in W$, $\deg_G(w) = \cardinal{\gup{\left\{ w \right\}}}$.
\end{lemma}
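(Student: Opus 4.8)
The plan is to unfold the definition of $\gup{\{w\}}$ and match it elementwise against the set whose cardinality defines $\deg_G(w)$ in \eref{eq.def.degree.element}. First I would recall the structural consequence of working in $S_\mathcal{MC}$: because $S$ satisfies the complement condition \eref{eq.s.complement.condition.lemma}, the minimizing function $f_{T(u)}$ attached to each $T(u)$ via lemma \eref{lem.minPiecewise} obeys $f_{T(u)}(A^c) = 1 - f_{T(u)}(A)$ for all $A \subseteq W$. By theorem \eref{the.single.atom.single.member}, this forces every $f_{T(u)}$ to possess a single atom $\alpha(f_{T(u)})$ which is, moreover, a singleton. This is the only place the $S_\mathcal{MC}$ hypothesis is genuinely used.

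Next I would rewrite the membership condition for $\gup{\{w\}}$. By definition of the upper approximation, $x \in \gup{\{w\}}$ precisely when $S(T(x), \{w\}^c) = 0$, that is $f_{T(x)}(\{w\}^c) = 0$. Applying the complement condition converts this into $f_{T(x)}(\{w\}) = 1$, which is the key algebraic simplification.

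The heart of the argument is then to express $f_{T(x)}(\{w\}) = 1$ in terms of atoms. By proposition \eref{prop.structure}, $f_{T(x)}(\{w\}) = 1$ holds if and only if some atom of $f_{T(x)}$ is contained in $\{w\}$. Since the unique atom $\alpha(f_{T(x)})$ is a singleton, the only non-empty subset of $\{w\}$ it can equal is $\{w\}$ itself; hence $f_{T(x)}(\{w\}) = 1$ is equivalent to $\alpha(f_{T(x)}) = \{w\}$. Combining the equivalences yields the set identity
\[
\gup{\{w\}} = \left\{ x \in U \mid \alpha(f_{T(x)}) = \{w\} \right\},
\]
and taking cardinalities gives $\cardinal{\gup{\{w\}}} = \deg_G(w)$ at once from \eref{eq.def.degree.element}.

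I do not anticipate a real obstacle, as the lemma is essentially a chain of equivalences. The one point demanding care is the transition in the previous paragraph: one must invoke proposition \eref{prop.structure} together with the singleton-atom structure from theorem \eref{the.single.atom.single.member}, so that the condition ``an atom is contained in $\{w\}$'' collapses exactly to ``the atom equals $\{w\}$'' precisely because $\cardinal{\alpha(f_{T(x)})} = 1$. Everything else is a direct substitution.
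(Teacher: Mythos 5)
Your proposal is correct and follows essentially the same route as the paper's proof: both unfold $\gup{\{w\}}$ via the complement condition to the condition $S(T(x),\{w\})=1$, then use the single-atom, singleton-atom structure of the $f_{T(x)}$'s (proposition \eref{prop.structure} and theorem \eref{the.single.atom.single.member}) to identify this set with $\left\{ x \in U \mid \alpha(f_{T(x)}) = \{w\} \right\}$, whose cardinality is $\deg_G(w)$ by definition. Your write-up merely makes explicit the justifications that the paper's terse three-line chain of equalities leaves implicit.
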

\begin{proof}
	It is sufficient to show that $\gup{\left\{ w \right\}} = \left\{ x \in U \vert {\boldsymbol{a}}(f_{T(x)}) = w \right\}$.
	\begin{align}
	\begin{aligned}
	\gup{\left\{ w \right\}} = & \left\{ x \in U \vert S(T(x), \left\{ w \right\}) = 1 \right\} \\
	= & \left\{ x \in U \vert \alpha(f_{T(x)}) \subseteq \left\{ w \right\} \right\} \\
	= & \left\{ x \in U \vert {\boldsymbol{a}}(f_{T(x)}) = w \right\} .
	\end{aligned}
	\end{align}
\end{proof}
\begin{theorem}
	\label{the.hom.topol}
	Let \srougheq $ $ and $G' = (U',W',T',S')$ be two $S_\mathcal{MC}$-approximations such that $\vert U \vert = \vert U' \vert$ and $\vert W \vert = \vert W' \vert$, then their corresponding topologies that are formed as in theorem \eref{the.s.rough.gup.topology} are homeomorphic if and only if for each non-negative integer $i$, $ \vert W_i \vert = \vert W_i^{\prime} \vert$.
\end{theorem}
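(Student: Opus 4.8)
The plan is to show that every topology arising as in Theorem \eref{the.s.rough.gup.topology} is in fact a \emph{partition topology}, and then to reduce the homeomorphism question to a comparison of block sizes. First I would observe that, by the complement condition \eref{eq.s.complement.condition.lemma} together with Theorem \eref{the.single.atom.single.member}, each minimizing function $f_{T(x)}$ (where $S(T(x),B)=f_{T(x)}(B)$ as in Lemma \eref{lem.minPiecewise}) has a single atom of size one, since $f_{T(x)}(B^c)=1-f_{T(x)}(B)$ holds for all $B$. Writing ${\boldsymbol{a}}(f_{T(x)})$ for its unique element, I would define a labeling map $\ell : U \to W$ by $\ell(x) = {\boldsymbol{a}}(f_{T(x)})$ and compute, for any $A \subseteq W$,
\[
\gup{A} = \{x \in U \mid S(T(x), A^c) = 0\} = \{x \in U \mid f_{T(x)}(A)=1\} = \{x \in U \mid \ell(x) \in A\} = \ell^{-1}(A),
\]
where the middle step uses the complement condition and the last step uses Proposition \eref{prop.structure}. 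Hence $\tau = \{\ell^{-1}(A) \mid A \subseteq W\}$ consists precisely of the unions of the fibers $\gup{\{w\}} = \ell^{-1}(w)$, so $(U,\tau)$ is the partition topology induced by the partition $\{\ell^{-1}(w) \mid w \in \ell(U)\}$ of $U$.

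The second step is to translate the degree data into block sizes. By Lemma \eref{lem.deg.g.eq.card.gup}, $\deg_G(w) = \cardinal{\gup{\{w\}}} = \cardinal{\ell^{-1}(w)}$, so for $i \geq 1$ the quantity $\cardinal{W_i}$ counts the blocks of the partition having exactly $i$ elements, while $\cardinal{W_0}$ counts the labels in $W$ that are never used. Moreover the nonempty fibers are exactly the minimal nonempty open sets of $\tau$, since any open subset of a fiber is a union of fibers contained in it, hence the fiber itself or the empty set. Thus the homeomorphism type of $(U,\tau)$ is governed entirely by the multiset of block sizes.

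For the direction $(\Leftarrow)$, assume $\cardinal{W_i} = \cardinal{W_i'}$ for every $i$. Then for each $i \geq 1$ the partitions of $U$ and $U'$ have the same number of blocks of size $i$, so I can fix a size-preserving bijection between the nonempty blocks of $\tau$ and those of $\tau'$ and, within each matched pair, an arbitrary bijection of the underlying fibers; together these assemble into a bijection $\Phi : U \to U'$ carrying blocks onto blocks. Because the open sets on either side are exactly the unions of blocks, both $\Phi$ and $\Phi^{-1}$ send open sets to open sets, so $\Phi$ is a homeomorphism.

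For the direction $(\Rightarrow)$, suppose $\Phi : U \to U'$ is a homeomorphism. The key point, and the step I expect to be the main obstacle, is that $\Phi$ must carry the minimal open neighborhood of each point onto the minimal open neighborhood of its image: if $b(u)$ denotes the block of $u$ (its smallest open neighborhood) and $V'$ is an open set with $\Phi(u)\in V'\subseteq \Phi(b(u))$, then $\Phi^{-1}(V')$ is open, contains $u$, and lies in $b(u)$, hence equals $b(u)$ by minimality, forcing $V'=\Phi(b(u))$. Thus $\Phi$ restricts to a bijection between the nonempty blocks of $\tau$ and those of $\tau'$, and by Lemma \eref{lem.homo.same.size} matched blocks have equal cardinality. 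This yields $\cardinal{W_i} = \cardinal{W_i'}$ for all $i \geq 1$; finally, since $\cardinal{W} = \cardinal{W'}$ and $\sum_i \cardinal{W_i} = \cardinal{W}$, the case $i = 0$ follows automatically, completing the proof.
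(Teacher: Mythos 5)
Your proof is correct, and while the forward implication follows the same basic idea as the paper's (matching labels of equal degree and lifting this to a block-by-block bijection of the universes), your treatment of the converse is genuinely different and cleaner. The paper never states explicitly that $\tau$ is a partition topology; your identification $\gup{A}=\ell^{-1}(A)$ via the labeling map $\ell(x)={\boldsymbol{a}}(f_{T(x)})$ makes this structure explicit and does all the heavy lifting. For the direction $(\Rightarrow)$, the paper picks an element $w$ of minimal positive degree, shows $\Phi$ maps $\gup{\{w\}}$ onto some $\gupp{\{w'\}}$ of equal size, then removes $\gup{\{w\}}$ from $U$ and $w$ from $W$ to form induced approximations $G_1$, $G_1'$ and argues by finite descent; this requires the unproved (and not entirely trivial) claim that the restricted map $\Phi_1$ is again a homeomorphism between the induced topologies, which the paper dismisses with ``it is easy to verify.'' Your argument avoids the induction entirely: you show directly that any homeomorphism carries the minimal open neighborhood (block) of each point onto the minimal open neighborhood of its image, so all blocks are matched simultaneously and size-preservingly, after which $\cardinal{W_i}=\cardinal{W_i'}$ for $i\geq 1$ is immediate and the $i=0$ case follows from $\cardinal{W}=\cardinal{W'}$ exactly as in the paper. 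What you give up is staying inside the $S$-approximation formalism (the paper's descent keeps everything phrased in terms of sub-approximations); what you gain is a shorter, self-contained argument whose only topological input is the elementary fact that homeomorphisms preserve minimal open neighborhoods, plus a structural description of $\tau$ that also explains, as a byproduct, why these topologies are clopen.
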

\begin{proof}
	Suppose $\vert W_i \vert = \vert W_i^{\prime} \vert$, for all $i$. We define a function of the form $\gamma : W \rightarrow W'$ such that for each $w_i \in W$ and $w_j^{\prime} \in W'$, $\gamma(w_i) = w_j^{\prime}$ implies $\deg_G (w_i) = \deg_{G'} (w^{\prime}_j)$. Since $\cardinal{W_i} = \cardinal{W_i^{\prime}}$ for all $i$, we can define a bijective function of this kind. So, from now on, we assume that $\gamma$ is one-to-one and onto, i.e. a bijection.
	
	Now we define a function $\Phi : U \rightarrow U'$ such that for each $u_i \in U$ and $u_j^{\prime} \in U'$, $\Phi(u_i) = u_j$ implies that $\gamma({\boldsymbol{a}}(f_{T(u_i)})) = {\boldsymbol{a}}(f^{\prime}_{T'(u_j^{\prime})})$, where $f^{\prime}_{A'}(B') = S'(A',B')$ for every $A',B' \subseteq W'$. By definition of $\gamma$ it is obvious that $\Phi$ can be defined to be a bijection, since
	\begin{align}
	\begin{aligned}
	\deg_{G'} ({\boldsymbol{a}}(f^{\prime}_{T'(u_j^{\prime})})) = & \deg_{G'} (\gamma ( {\boldsymbol{a}}(f_{T(u_i)})) ) \\
	= & \deg_G ({\boldsymbol{a}}(f_{T(u_i)})) .
	\end{aligned}
	\end{align}
	Assuming it so, we show that $\Phi$ is a homeomorphism between $(U, \tau)$ and $(U', \tau^{\prime})$, where $(U', \tau^{\prime})$ is the topology formed by $G'$ as in theorem \eref{the.s.rough.gup.topology}.
	
	It is sufficient to show that $\Phi^{-1}$ is continuous, continuity of $\Phi$ can be proved in a similar manner.
	
	Let $A \subseteq W$ and $H = \gup{A}$. We need to show that there exists $A' \subseteq W'$ such that $\Phi(H) = \gupp{A'}$.
	\begin{align}
	\begin{aligned}
	H = \gup{A} = & \left\{ u \in U \vert S(T(u), A) = 1 \right\} \\
	= &  \left\{ u \in U \vert \alpha(f_{T(u)}) \subseteq A \right\} \\
	= & \left\{ u \in U \vert {\boldsymbol{a}}(f_{T(u)}) \in A \right\} .
	\end{aligned}
	\end{align}
	We define $A'$ as follows,
	\begin{align}
	\begin{aligned}
	A' = & \left\{ {\boldsymbol{a}}(f^{\prime}_{T'(u')}) \vert u' \in \Phi(H) \right\} \\
	= & \left\{ \gamma ( {\boldsymbol{a}}(f_{T(u)})) \vert u \in H \right\} .
	\end{aligned}
	\end{align}
	It is clear by definition of $\gamma$ and $\Phi$ that $A' = \gamma(A)$.
	
	We have
	\begin{align}	
	\begin{aligned}
	\gupp{A'} = & \left\{ u' \in U' \vert S'(T'(u'), A') = 1 \right\} \text{ (* by definition of } \gupp{\cdot} \text{ *) }\\
	= & \left\{ u' \in U' \vert \alpha(f^{\prime}_{T'(u')}) \subseteq A' \right\} \text{ (* all } f \text{'s are single-atomic *)} \\
	= & \left\{ u' \in U' \vert {\boldsymbol{a}}(f^{\prime}_{T'(u')}) \in A' \right\} \text{ (* all atoms are unary *)} \\
	= & \left\{ u' \in U' \vert \exists u \in H \:\: {\boldsymbol{a}}(f^{\prime}_{T'(u')}) = \gamma({\boldsymbol{a}}(f_{T(u)}))  \right\} \text{ (* by definition of } A' \text{ *)} \\
	= & \left\{ u' \in U' \vert \exists u \in H \:\: u' = \Phi(u) \right\} \text{ (* by definition and bijectiveness of } \Phi \text{ *)} \\
	= & \Phi(H) .
	\end{aligned}
	\end{align}
	So for every open $H$ in $(U,\tau)$, $\Phi(H)$ is also open in $(U', \tau^{\prime})$ which means that $\Phi^{-1}$ is continuous.
	
	Conversely, let $\Phi$ be some arbitrary homeomorphism between $(U, \tau)$ and $(U', \tau^{\prime})$. We prove that $\cardinal{W_i} = \cardinal{W_i^{\prime}}$, for all positive integers $i$. It is clear that in this case, for $i=0$, $\cardinal{W_0}$ would be equal to $\cardinal{W_0^{\prime}}$ if the equality holds for all other $i$'s, since $\cardinal{W} = \cardinal{W'}$, because $W_i$'s partitions $W$ and $W_i^{\prime}$'s partition $W'$.
	
	Let $w \in W \cup W'$ be an element with minimal positive degree. We can assume that $w \in W$ without any loss of generality, since $\Phi^{-1}$ is a homeomorphism as well. Let $u$ be such an element of $U$ that $\alpha(f_{T(u)})=\{w\}$. By property (4) proposition \eref{s.app.prop}, $\gup{\alpha(f_{T(u)})}$ is the smallest open set containing $u$. Let $u' = \Phi(u)$, so $\cardinal{\gup{\alpha(f_{T(u)})}} = \cardinal{\gupp{\alpha(f_{T'(u')}^{\prime})}}$, because the former is the smallest open set containing $u$ and the latter is the smallest open set containing $u'$ and their size must be equal according to lemma \eref{lem.homo.same.size}. It must be the case that all elements of $\Phi(\gup{\alpha(f_{T(u)})})$ have the same single element atom, because $\gup{\alpha(f_{T(u)}}$ is a minimal non-empty open set, and so $\Phi(\gup{\alpha(f_{T(u)})})$ is also a minimal non-empty open set. Let's name the element of this atom as $w'$, we claim that $\deg_{G'}(w') = \deg_{G}(w)$, and this happens since all elements of $\Phi(\gup{\alpha(f_{T(u)})})$ share one single atom. By lemma \eref{lem.deg.g.eq.card.gup}, 
	\begin{align}
	\begin{aligned}
	\deg_G(w) = & \cardinal{\gup{\left\{ w \right\}}} = \cardinal{\gup{\alpha(f_{T(u)})}} \\
	= & \cardinal{\gupp{\alpha(f^{\prime}_{T'(u')}}} = \cardinal{\gupp{\left\{ w' \right\}}} \\
	= & \deg_{G'}(w') .
	\end{aligned}
	\end{align}
	So, $\Phi$ maps all elements of $\gup{\left\{ w \right\}}$ to all elements of $\gupp{\left\{ w' \right\}}$.
	
	Let's define $G_1 = (U_1 = U - \gup{\left\{ w \right\}}, W_1 = W - \left\{ w \right\}, T_1, S_1)$ where $T_1$ and $S_1$ are induced by $U_1$ and $W_1$ from $T$ and $S$ in $G$, respectively. Let's define $G_1^{\prime} = \left( U_1^{\prime} = U' - \overline{G}(\left\{ w'\right\}), W_1^{\prime} = W' - \left\{ w' \right\}, T_1^{\prime}, S_1^{\prime} \right)$ similarly and $\Phi_1$ as the induced version of $\Phi$ by $U_1$ and $U_1 ^{\prime}$. It is easy to verify that $\Phi_1$ is a homeomorphism between $G_1$ and $G_1^{\prime}$. Continuing with the same procedure, according to finite descent principle leads us to the desired result, since $W$ and $U$ are finite sets.
\end{proof}
\begin{lemma}
	\label{lem.cardinal.u.based.deg.w}
	Let $G$ be defined as in theorem \eref{lem.s.w.s.empty}, then
	\begin{equation}
	\sum_{w \in W} \deg_G(w) = \cardinal{U} .
	\end{equation}
\end{lemma}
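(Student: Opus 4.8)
The plan is to exhibit a well-defined map $\phi : U \to W$ whose fibers are precisely the sets counted by the degrees, so that the claimed identity reduces to the observation that these fibers partition $U$.

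First I would observe that for each fixed $u \in U$, the function $f_{T(u)} : P(W) \to \Bset$ given by $f_{T(u)}(B) = S(T(u), B)$ is a minimizing function, since $S$ lies in the $S_{\mathcal{M}}$ class, and moreover satisfies the complement condition \eref{eq.the.complement.eq}, since $G$ is an $S_{\mathcal{MC}}$-approximation. By theorem \eref{the.single.atom.single.member}, $f_{T(u)}$ therefore has a single atom $\alpha(f_{T(u)})$ with $\cardinal{\alpha(f_{T(u)})} = 1$. Consequently its unique element ${\boldsymbol{a}}(f_{T(u)})$ is a well-defined member of $W$, and we may set $\phi(u) = {\boldsymbol{a}}(f_{T(u)})$.

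Next I would rewrite each degree as the cardinality of a fiber of $\phi$. By definition \eref{def.degree}, $\deg_G(w) = \cardinal{\{u \in U \mid \alpha(f_{T(u)}) = \{w\}\}}$, and since every atom appearing here is a singleton, the condition $\alpha(f_{T(u)}) = \{w\}$ is equivalent to $\phi(u) = w$. Hence $\deg_G(w) = \cardinal{\phi^{-1}(\{w\})}$ for each $w \in W$.

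Finally, because $\phi$ is a genuine function, each $u \in U$ lies in exactly one fiber $\phi^{-1}(\{w\})$, so the family $\{\phi^{-1}(\{w\})\}_{w \in W}$ is a partition of $U$. Summing the cardinalities of the blocks of a partition recovers the size of the whole set, giving $\sum_{w \in W} \deg_G(w) = \sum_{w \in W} \cardinal{\phi^{-1}(\{w\})} = \cardinal{U}$. There is no substantive obstacle here; the only point requiring care is verifying that $\phi$ is well-defined on all of $U$, that is, that every $u$ genuinely possesses a single singleton atom, and this is exactly what membership in the $S_{\mathcal{MC}}$ class buys us through theorem \eref{the.single.atom.single.member}.
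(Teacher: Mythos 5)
Your proof is correct: since the paper's own proof is just ``It is straightforward,'' your argument fills in exactly the intended reasoning, namely that by theorem \eref{the.single.atom.single.member} every $u \in U$ has a unique singleton atom, so the sets counted by the degrees are the fibers of the map $u \mapsto {\boldsymbol{a}}(f_{T(u)})$ and hence partition $U$. Nothing is missing, and the care you take in checking that $\phi$ is well-defined (via the $S_{\mathcal{MC}}$ complement condition) is precisely the one nontrivial point.
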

\begin{proof}
	It is straightforward.
\end{proof}
Let $M$ be a universe. Then we define $\mathcal{T}_{m,n}$ as the set of all topologies $(U, \tau)$ made by some \srougheq $ $ as in theorem \eref{lem.s.w.s.empty}, such that $U, W \subseteq M$, $\cardinal{U} = m$, and $\cardinal{W} = n$. In the following theorem, we will count the number of equivalence classes of $\mathcal{T}_{m,n}$ by $p(m,n)$, where $p(m,n)$ denotes the number of unordered partitions of $m$ into a maximum of $n$ positive integer summands, or equivalently the number of unordered partitions of $m$ into exactly $n$ non-negative integers\cite{Niven1965}.
\begin{theorem}
	\label{the.non.homo.count}
	The number of equivalence classes of $\mathcal{T}_{m,n}$ under homeomorphism is $p(m,n)$.
\end{theorem}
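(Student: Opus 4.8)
The plan is to set up an explicit bijection between the homeomorphism classes of $\mathcal{T}_{m,n}$ and the partitions of $m$ into exactly $n$ non-negative integer summands, whose number is $p(m,n)$ by the definition recalled above. To each $S_{\mathcal{MC}}$-approximation \srougheq $ $ producing a topology in $\mathcal{T}_{m,n}$ I would attach the multiset of degrees $\lambda(G) = \left\{ \deg_G(w) \mid w \in W \right\}$. By lemma \eref{lem.cardinal.u.based.deg.w} we have $\sum_{w \in W} \deg_G(w) = \cardinal{U} = m$, and since $\cardinal{W} = n$ this multiset consists of exactly $n$ non-negative integers summing to $m$; in other words $\lambda(G)$ is a partition of $m$ into exactly $n$ non-negative parts. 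Moreover, for each $i$ the multiplicity of $i$ in $\lambda(G)$ is exactly $\cardinal{W_i}$ from definition \eref{def.degree}, so knowing $\lambda(G)$ is the same as knowing the sequence $(\cardinal{W_0}, \cardinal{W_1}, \ldots)$.

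First I would verify that $\lambda$ descends to a well-defined and injective map on homeomorphism classes. This is immediate from theorem \eref{the.hom.topol}: two approximations $G$ and $G'$, both with the cardinalities forced by membership in $\mathcal{T}_{m,n}$, yield homeomorphic topologies if and only if $\cardinal{W_i} = \cardinal{W_i^{\prime}}$ for every non-negative integer $i$, which by the previous paragraph is precisely the assertion $\lambda(G) = \lambda(G')$. Applying this with the identity homeomorphism shows in particular that $\lambda$ depends only on the topology and not on the approximation realizing it, so the induced map on classes is well defined, and the equivalence makes it injective.

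It remains to establish surjectivity, namely that every partition is realized by some member of $\mathcal{T}_{m,n}$. Given non-negative integers $d_1, \ldots, d_n$ with $\sum_k d_k = m$, I would choose $W = \left\{ w_1, \ldots, w_n \right\} \subseteq M$ and partition a set $U \subseteq M$ of size $m$ into blocks $B_1, \ldots, B_n$ with $\cardinal{B_k} = d_k$ (taking $B_k = \emptyset$ when $d_k = 0$), and then set $T(u) = \left\{ w_k \right\}$ for every $u \in B_k$. Fixing a choice function that assigns to each non-empty $A \subseteq W$ an element $c(A) \in W$ with $c(\left\{ w_k \right\}) = w_k$, I would define $S(A, B) = 1$ exactly when $c(A) \in B$. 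Each slice $S(A, \cdot)$ is then minimizing with the single singleton atom $\left\{ c(A) \right\}$, so by theorem \eref{the.single.atom.single.member} this $S$ makes $G$ an $S_{\mathcal{MC}}$-approximation. Since ${\boldsymbol{a}}(f_{T(u)}) = c(T(u)) = w_k$ for every $u \in B_k$, lemma \eref{lem.deg.g.eq.card.gup} gives $\deg_G(w_k) = \cardinal{B_k} = d_k$, so $\lambda(G)$ is the prescribed partition.

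Combining these steps, $\lambda$ induces a bijection between the homeomorphism classes of $\mathcal{T}_{m,n}$ and the partitions of $m$ into exactly $n$ non-negative parts, and the number of the latter is $p(m,n)$. I expect the surjectivity step to be the main obstacle. The delicate point is that elements of $U$ sharing a common $T$-value are forced to share the same atom, so one cannot assign atoms to the elements of $U$ completely freely; the construction circumvents this by sending each block to its own singleton $\left\{ w_k \right\}$, and it is exactly here that the freedom to vary $T$ rather than only $S$ is used, which is what guarantees realizability even when $m$ far exceeds $2^{\cardinal{W}} - 1$.
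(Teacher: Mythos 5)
Your proposal is correct and takes essentially the same approach as the paper: both identify homeomorphism classes of $\mathcal{T}_{m,n}$ with the degree data $(\cardinal{W_i})_i$ via theorem \eref{the.hom.topol} and match these with partitions of $m$ into $n$ non-negative summands via lemma \eref{lem.cardinal.u.based.deg.w}. The only difference is that the paper dismisses bijectivity as ``easily seen,'' whereas you explicitly prove the surjectivity (realizability) half with the choice-function construction of $S$, which is valid (each slice is minimizing with singleton atom $\left\{ c(A) \right\}$, so theorem \eref{the.single.atom.single.member} applies) and genuinely fills in the step the paper leaves implicit.
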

\begin{proof}
	According to theorem \eref{the.hom.topol}, it is sufficient to show that there exists some bijection between partitions of $m$ into $n$ non-negative summands and possible combinations of $\cardinal{W_i}$'s, since every combination of $W_i$'s corresponds to a unique equivalence class under homeomorphism. In compliance with lemma \eref{lem.cardinal.u.based.deg.w}, we can construct such a bijection by defining $\cardinal{W_i} =$ number of repetitions of the integer $i$ in the partition. It is easily seen that this method defines a bijection.
\end{proof}

\section{Conclusion}
$S$-approximation is a novel tool for studying approximation of uncertain data which is not necessarily described by inclusion relation.
We identified a sub-class of $S$-approximations, called $S_{\mathcal{MC}}$-approximations that have certain topological characteristics that lead to existence of some topologies which are investigated in this paper along with some of their properties. Finally, we have enumerated these topologies up to homeomorphism.

\date{ \small M. R. Hooshmandasl \\
Department of Mathematics and Computer Sciences\\
Yazd University, Yazd, Iran \\
The Laboratory of Quantum Information Processing, Yazd University, Yazd, Iran\\
E-mail:hooshmandasl@yazd.ac.ir}

\date{ \small Alambardar Meybodi \\
Department of Mathematics and Computer Sciences\\
Yazd University, Yazd, Iran \\
The Laboratory of Quantum Information Processing, Yazd University, Yazd, Iran\\
E-mail:alambardar@stu.yazd.ac.ir}

\date{ \small A. K. Goharshady \\
	Institute of Science and Technology Austria,\\ Klosterneuburg, Austria.\\
	E-mail:goharshady@ist.ac.at}

\date{ \small A. Shakiba \\
	Department of Mathematics and Computer Sciences\\
	Yazd University, Yazd, Iran \\
	The Laboratory of Quantum Information Processing, Yazd University, Yazd, Iran\\
	E-mail:ali.shakiba@stu.yazd.ac.ir}

\end{document}